\newtheorem{thm}{Theorem}[section]
\newtheorem{lem}[thm]{Lemma}
\newtheorem{cor}[thm]{Corollary}
\newtheorem{thmA}{Theorem}
\newtheorem{corA}[thmA]{Corollary}
\theoremstyle{definition}
\theoremstyle{remark}
\numberwithin{equation}{section}
\def\HJ#1{\par\medskip\noindent{\bf#1.}\bgroup\it \ }
\def\EHJ{\egroup}
\DeclareMathOperator{\B}{{\rm B}_{\pi}}
\DeclareMathOperator{\I}{{\rm I}_\pi}
\newcommand{\NM}{\vartriangleleft}
\DeclareMathOperator{\Irr}{Irr}
\DeclareMathOperator{\IBr}{IBr}
\DeclareMathOperator{\Syl}{Syl}
\begin{document}

\title{Navarro vertices and lifts in solvable groups}

\author[L. Wang]{Lei Wang}
\address{School of Mathematical Sciences, Shanxi University, Taiyuan, 030006, China.}
\email{wanglei0115@163.com}

\author[P. Jin]{Ping Jin*}
\address{School of Mathematical Sciences, Shanxi University, Taiyuan, 030006, China.}
\email{jinping@sxu.edu.cn}

\subjclass[2010]{Primary 20C15; Secondary 20C20}

\thanks{*Corresponding author}

\keywords{Navarro vertices, lifts, linear characters, Brauer characters, $\pi$-partial characters}

\date{}

\maketitle

\begin{abstract}
Let $Q$ be a $p$-subgroup of a finite $p$-solvable group $G$, where $p$ is a prime,
and suppose that $\delta$ is a linear character of $Q$ with the property that
$\delta(x)=\delta(y)$ whenever $x,y\in Q$ are conjugate in $G$.
In this situation, we show that restriction to $p$-regular elements defines a canonical bijection from the set of those irreducible ordinary characters of $G$ with Navarro vertex $(Q,\delta)$ onto the set of irreducible Brauer characters of $G$ with Green vertex $Q$.
Also, we use this correspondence to examine the behavior of lifts of Brauer characters with respect to normal subgroups.
\end{abstract}

%%%--------------------------------------------
\section{Introduction}
Let $p$ be a fixed prime, and let $G$ be a finite $p$-solvable group.
For each irreducible complex character $\chi\in\Irr(G)$,
Navarro \cite{N2002} introduced a canonical pair $(Q,\delta)$ associated to $\chi$
which is uniquely defined up to $G$-conjugacy, where $Q$ is a $p$-subgroup of $G$ and $\delta\in\Irr(Q)$.
Such a pair $(Q,\delta)$ is called a {\bf Navarro vertex} of $\chi$,
and has been shown to be quite useful in many
problems in the character theory of solvable groups
(see, for example, \cite{C2007,C2008,C2012,CL2012,E2005,La2011,N2002a,N2003}
and the references therein).

Following Navarro, we use $\Irr(G|Q,\delta)$ to denote the set of all members of $\Irr(G)$ with Navarro vertex $(Q,\delta)$, and write  $\IBr_p(G|Q)$ for the set of irreducible $p$-Brauer characters of $G$ with vertex $Q$ (in the sense of Green).
Also, we denote by $G_\delta$ the elements of $N_G(Q)$ that stabilize $\delta$.

One of the important properties of Navarro vertices is that
if we take the vertex character $\delta$ to be the principal character $1_Q$,
then restriction $\chi\mapsto\chi^0$ defines a canonical bijection
$$\Irr(G|Q, 1_Q)\to \IBr_p(G|Q),$$
where $\chi^0$ denotes the restriction of $\chi$ to the set of $p$-regular elements of $G$;
see Theorem A of \cite{N2002}.

Our motivation is to extend the above result
by replacing the principal character $1_Q$ with certain linear characters $\delta$ of $Q$.
We mention that if $\delta$ is linear then $\chi^0$ has vertex $Q$ for each  $\chi\in\Irr(G|Q,\delta)$ with $\chi^0\in\IBr_p(G)$,
and this is the reason why we only consider linear vertex characters.
(Perhaps it is worthy mentioning that Navarro vertices $(Q,\delta)$ with $\delta(1)=1$ also appeared in Theorem B of \cite{WJ2023}.)
For convenience, we say that $\chi\in\Irr(G)$ is a {\bf lift} if $\chi^0\in\IBr_p(G)$,
and that $\delta\in\Irr(Q)$ is {\bf stable} in $G$ if $\delta(x)=\delta(y)$ whenever $x,y\in Q$ are conjugate in $G$. It is easy to see that if $\delta$ is stable in $G$, then $\delta$ is invariant in $N_G(Q)$, so that $G_\delta=N_G(Q)$.

With the notation as above, our main result can be stated as follows.

\begin{thmA}
Let $Q$ be a $p$-subgroup of a $p$-solvable group $G$, where $p$ is a prime,
and suppose that $\delta$ is a linear character of $Q$.
If $\delta$ is stable in $G$,
then restriction to $p$-regular elements defines a canonical bijection $\Irr(G|Q,\delta)\to\IBr_p(G|Q)$.
\end{thmA}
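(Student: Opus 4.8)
\emph{The plan.} Since the asserted map is restriction to $p$-regular elements, it is canonical by its very description; what has to be proved is that for every $\chi\in\Irr(G|Q,\delta)$ the restriction $\chi^0$ is an irreducible Brauer character — that is, $\chi$ is a lift, so that by the remark preceding the statement $\chi^0$ automatically has Green vertex $Q$ — and that $\chi\mapsto\chi^0$ carries $\Irr(G|Q,\delta)$ bijectively onto $\IBr_p(G|Q)$. I would proceed by induction on $|G|$, with $G=1$ trivial and the case $\delta=1_Q$ (Theorem A of \cite{N2002}) serving both as the final input and as the template. The tools kept in play are: Clifford theory over a normal $p'$-subgroup, for ordinary and for Brauer characters, mutually compatible and compatible with $\chi\mapsto\chi^0$ and with Navarro and Green vertices; inflation along a normal $p$-subgroup, to which $\IBr_p$ is insensitive and which respects all the relevant data; and a character-triple isomorphism chosen to preserve Navarro vertices and to commute with restriction to $p$-regular elements (cf.\ \cite{N2002}). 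Once the lift property is in hand one has $\mathbf{O}_p(G)\le Q$ up to conjugacy — a lift with linear Navarro vertex character $(Q,\delta)$ has Green vertex $Q$, and Green vertices contain $\mathbf{O}_p(G)$ — which is what makes the next reduction available.

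\emph{Reductions.} Put $N:=\mathbf{O}_{p'}(G)$, so that $\IBr_p(N)=\Irr(N)$, and observe that both sides of the desired bijection decompose according to the $G$-orbit in $\Irr(N)$ lying underneath; since a lift $\chi$ and its restriction $\chi^0$ lie over the same orbit, the bijection must be built orbit by orbit. Over a non-$G$-invariant $\theta$, the ordinary and modular Clifford correspondences transport the problem to the proper subgroup $T=G_\theta$, in which $\delta$ is still stable and, after a conjugation, $Q\le T$; both correspondences commute with $(\cdot)^0$ and preserve both vertices, so the inductive hypothesis for $T$ settles this part. Over a $G$-invariant $\theta$, a Navarro-vertex-preserving character-triple isomorphism reduces us to the case that $N$ is central. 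The stability hypothesis then powers an inflation reduction: writing $P:=\mathbf{O}_p(G)\le Q$, the restriction $\lambda:=\delta_P$ is $G$-invariant, because for $x\in P$ and $g\in G$ the elements $x,x^{g}$ lie in $P\le Q$ and are $G$-conjugate, forcing $\delta(x)=\delta(x^{g})$; hence $K:=\Ker\lambda\NM G$, every $\chi\in\Irr(G|Q,\delta)$ satisfies $\chi_P=\chi(1)\lambda$ so $K\le\Ker\chi$, the normal $p$-subgroup $K$ lies in the kernel of every member of $\IBr_p(G)$, the induced linear character of $Q/K$ is again stable, and $\Irr(G|Q,\delta)$, $\IBr_p(G|Q)$ are inflated from $G/K$ with matching vertices. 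If $K\neq 1$ induction applies; otherwise $\lambda$ is faithful, so $P$ is cyclic, and, being $G$-invariant and faithful, $P\le Z(G)$.

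\emph{The primitive case and the main difficulty.} We are reduced to the situation where $\mathbf{O}_{p'}(G)$ is central, $\mathbf{O}_p(G)=P$ is cyclic and central, and $\delta$ restricts faithfully to $P$; the goal is the bijection in this ``primitive'' configuration. Here I would try to show — after whatever further structural analysis or Clifford/character-triple step is needed, the Fitting-type constraints on $G$ being strong once $\mathbf{O}_p(G)$ is central cyclic — that $\delta$ extends to a linear character $\hat\delta$ of $G$; then multiplication by $\overline{\hat\delta}$ is a bijection $\Irr(G|Q,\delta)\to\Irr(G|Q,1_Q)$ that commutes with $\chi\mapsto\chi^0$ and preserves Green vertices, and composing with the bijection of \cite{N2002} finishes the proof, incidentally confirming that the members of $\Irr(G|Q,\delta)$ really are lifts. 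The crux of the whole argument, and the place where the stability of $\delta$ must be used to the hilt, is precisely this primitive case together with the bookkeeping that leads to it: one must check that the character-triple reduction over $\mathbf{O}_{p'}(G)$ simultaneously respects Navarro vertices and commutes with $(\cdot)^0$; one must arrange the induction so that it stays well-founded even though that reduction can enlarge $G$ (for instance inducting on a pair such as $(|G|,|G:\mathbf{O}_{p'}(G)|)$, or performing it only after all other reductions are exhausted); and one must track at every stage that $\chi$ and $\chi^0$ lie over corresponding data, since this is exactly what forces the partial bijections produced by the reductions to glue into the single canonical map $\chi\mapsto\chi^0$ rather than merely into some bijection.
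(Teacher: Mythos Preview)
Your plan is a genuinely different route from the paper's, and the places you yourself flag as ``the crux'' are exactly where it is not yet a proof. The paper never passes through $\mathbf{O}_{p'}(G)$, character triples, or a primitive endgame. Instead it inducts directly along the normal-nucleus construction: for $\chi\in\Irr(G|Q,\delta)$ pick $(N,\theta)\in\mathcal{N}^*(G)$ under $\chi$ (so $N$ is the largest normal subgroup over which $\chi$ has $\pi$-factored constituents), set $T=G_\theta$, and pass to the Clifford correspondent $\psi\in\Irr(T|Q,\delta)$. The stability of $\delta$ enters through one short lemma which is the real engine of the argument: if $\theta$ lies under a $\pi$-factored $\gamma$ with $(\gamma_{\pi'})_Q=\delta$ linear and $G$-stable, then $\theta_{\pi'}$ is a $G$-invariant linear character. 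This forces $G_\theta=G_{\theta_\pi}=G_{\theta^0}$, so the ordinary and modular Clifford correspondences over $N$ have the same stabilizer and commute with $(\cdot)^0$; induction on $|G|$ handles $T<G$, while $T=G$ forces $\chi$ itself to be $\pi$-factored of $\pi$-degree with $Q\in\Hall_{\pi'}(G)$, and one finishes using that a stable $\delta$ extends uniquely to a $\pi'$-special character of $G$. Injectivity and surjectivity run along the same skeleton, with two auxiliary lemmas identifying $N$ intrinsically as the largest normal subgroup over which $\chi^0$ has $\pi$-degree constituents.

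What this buys over your plan: the induction is on $|G|$ alone and never enlarges the group, and there is no need to manufacture a character-triple isomorphism that simultaneously preserves Navarro vertices and commutes with $(\cdot)^0$. That simultaneous compatibility is not standard and is the genuine gap in your proposal; likewise, your iterated $\mathbf{O}_{p'}$/character-triple/$\mathbf{O}_p$ reductions do not obviously terminate (after killing $\Ker\lambda$ the new $\mathbf{O}_{p'}$ need not be central, so you must loop back, while the triple step can enlarge $|G|$), and the lexicographic fix you suggest would have to be checked with care. The paper's observation that stability of $\delta$ propagates to $G$-invariance of $\theta_{\pi'}$ is precisely what makes a straight Clifford descent work without any of this machinery.
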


Actually, we will prove Theorem A in the more general setting of Isaacs' $\pi$-partial characters
(see Theorem \ref{main}).
The following Corollary B, which is an important special case of Theorem 2.1 of \cite{N2004},
is useful for applications.
We will establish the $\pi$-version of this result (see Corollary \ref{rdz}),
which can also be used to prove theorems about lifts of $\pi$-partial characters.
For instance, we may replace Theorem 2.3 of \cite{CL2012} by our Corollary \ref{rdz}
to obtain the $\pi$-analogues of Theorems 1.1 and 1.3 of that paper.

\begin{corA}
Let $\varphi\in\IBr_p(G|Q)$, where $G$ is a $p$-solvable group and $Q$ is a $p$-subgroup of $G$. If $Q\NM G$ and $\delta$ is a $G$-invariant linear character of $Q$,
then there is a unique lift $\chi\in\Irr(G)$ of $\varphi$ having Navarro vertex $(Q,\delta)$.
\end{corA}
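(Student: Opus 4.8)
The plan is to derive Corollary B as an immediate consequence of Theorem A. First I would check that the hypotheses match up: since $Q\NM G$, any $G$-invariant linear character $\delta$ of $Q$ is automatically stable in $G$. Indeed, for $x\in Q$ and $g\in G$ we have $gxg^{-1}\in Q$ and $\delta(gxg^{-1})=\delta(x)$ by $G$-invariance, so $\delta$ takes equal values on any two $G$-conjugate elements of $Q$, which is precisely the definition of stability. (For a normal $p$-subgroup the two conditions are in fact equivalent.) Hence Theorem A applies to the pair $(Q,\delta)$.

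Next I would invoke Theorem A: restriction to $p$-regular elements, $\chi\mapsto\chi^0$, is a bijection from $\Irr(G|Q,\delta)$ onto $\IBr_p(G|Q)$. Two consequences follow immediately. First, every $\chi\in\Irr(G|Q,\delta)$ has $\chi^0\in\IBr_p(G)$, so each member of $\Irr(G|Q,\delta)$ is a lift; thus being a lift of $\varphi$ with Navarro vertex $(Q,\delta)$ is exactly the same as lying in $\Irr(G|Q,\delta)$ and restricting to $\varphi$. Second, since $\varphi\in\IBr_p(G|Q)$ lies in the range of the bijection, there is one and only one $\chi\in\Irr(G|Q,\delta)$ with $\chi^0=\varphi$. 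Putting these together yields Corollary B: existence comes from surjectivity of the bijection and uniqueness from its injectivity.

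There is no serious obstacle in this reduction; all the substance is contained in Theorem A (equivalently, its $\pi$-partial-character strengthening, Theorem \ref{main}), and what remains is only the observation that $G$-invariance of a linear character on the normal $p$-subgroup $Q$ forces stability, together with the elementary bookkeeping above. One could also obtain Corollary B from the $\pi$-version Corollary \ref{rdz} by taking $\pi=\{p\}'$ and using Isaacs' identification $I_{p'}(G)=\IBr_p(G)$ for $p$-solvable $G$; but going directly through Theorem A is the cleanest route.
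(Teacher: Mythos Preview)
Your proposal is correct and matches the paper's approach: the paper presents Corollary B (and its $\pi$-version, Corollary~\ref{rdz}) as an immediate consequence of Theorem~A (respectively Theorem~\ref{main}), and your write-up simply makes explicit the one observation needed to apply it, namely that a $G$-invariant linear character of a normal subgroup $Q$ is automatically stable in $G$. The bookkeeping you add about existence and uniqueness coming from surjectivity and injectivity of the bijection is exactly what the paper leaves implicit.
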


The next result is another immediate consequence of Theorem A.
In \cite{N2002a}, Navarro proved that
$$|\Irr(G|Q,\delta)|\le |\Irr(G_\delta|Q,\delta)|,$$
and equality holds if either $\delta=1_Q$ or $\delta$ is linear and $Q\in\Syl_p(G)$;
see also \cite{N2003}.
(We mention that it is very hard to prove that equality holds under these two conditions,
because they are essentially the strong forms of the Alperin weight conjecture (see \cite{IN1995}) and the McKay conjecture (see \cite{IN2001}) only for $p$-solvable groups.)
Here, we still have equality whenever $\delta$ is a $G$-stable linear character of $Q$;
see Corollary \ref{=} for a more general $\pi$-version.

\begin{corA}
Let $G$ be a $p$-solvable group for a prime $p$,
$Q$ a $p$-subgroup of $G$, and $\delta$ a linear character of $Q$.
If $\delta$ is stable in $G$, then $|\Irr(G|Q,\delta)|=|\Irr(N_G(Q)|Q,\delta)|$.
\end{corA}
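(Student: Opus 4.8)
The plan is to deduce this from Theorem A by a counting argument that relates the Navarro vertex data over $G$ to the corresponding data over $N:=N_G(Q)$. Since $\delta$ is stable in $G$, we have $G_\delta=N$, and $\delta$ is in particular stable in $N$ as well (conjugacy in $N$ is a special case of conjugacy in $G$). Thus Theorem A applies to \emph{both} $G$ and $N$: restriction to $p$-regular elements gives canonical bijections
\[
\Irr(G|Q,\delta)\longrightarrow\IBr_p(G|Q)\qquad\text{and}\qquad\Irr(N|Q,\delta)\longrightarrow\IBr_p(N|Q).
\]
Hence it suffices to prove $|\IBr_p(G|Q)|=|\IBr_p(N|Q)|$, i.e. that the Green correspondence (or the relevant Brauer-character analogue in the $p$-solvable setting) induces a bijection $\IBr_p(G|Q)\to\IBr_p(N|Q)$. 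This last fact is standard: for $p$-solvable groups, the Green correspondence with respect to $Q$ relative to $N_G(Q)$ gives a bijection between irreducible Brauer characters of $G$ with vertex $Q$ and irreducible Brauer characters of $N_G(Q)$ with vertex $Q$ (this is where $p$-solvability and the theory of vertices of Brauer characters enter; it can also be read off from the Fong--Swan theorem together with the ordinary Green correspondence, or extracted from the $\pi$-theoretic machinery underlying Theorem \ref{main}).

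Concretely, I would proceed as follows. First, invoke Theorem A for $N=N_G(Q)$ in place of $G$: one must check that a $p$-subgroup of $G$ contained in $N$ is still a $p$-subgroup of $N$ (immediate), that $Q$ is normal in $N$ (true by definition of $N_G(Q)$, though Theorem A does not even require this), and that $\delta$ being stable in $G$ forces $\delta$ stable in $N$. Second, compose the two bijections from Theorem A with the Green correspondence bijection $\IBr_p(G|Q)\to\IBr_p(N|Q)$ to obtain a bijection $\Irr(G|Q,\delta)\to\Irr(N|Q,\delta)$, which yields the claimed equality of cardinalities. Alternatively — and this is likely the route the authors take, given the paper's $\pi$-theoretic framing — one can bypass the Brauer-character language entirely and cite the $\pi$-version, Corollary \ref{=}, whose proof presumably runs the same counting argument at the level of $\pi$-partial characters.

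The main obstacle is justifying the bijection $\IBr_p(G|Q)\to\IBr_p(N|Q)$ in a way that is genuinely canonical and compatible with the restriction maps of Theorem A, rather than merely a cardinality coincidence. For $p$-solvable groups this is not hard — it follows from the well-behaved vertex theory of Brauer characters (every $\varphi\in\IBr_p(G)$ has a vertex well-defined up to conjugacy, and Green correspondence restricts to vertices) — but one should be careful that "vertex $Q$" on the $N$-side means vertex $Q$ as a subgroup of $N$, which coincides with vertex $Q$ in $G$ precisely because $Q$ is a Sylow $p$-subgroup of $QN$-type situations are avoided here since $Q\trianglelefteq N$. Once this identification is in place, the rest is formal. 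I would also remark that the inequality $|\Irr(G|Q,\delta)|\le|\Irr(N|Q,\delta)|$ already noted by Navarro gives one direction for free, so in fact only the reverse inequality needs the Green-correspondence input — which may streamline the write-up.
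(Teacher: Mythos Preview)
Your approach is correct and essentially identical to the paper's: apply Theorem~A to both $G$ and $N_G(Q)$ to reduce the claim to $|\IBr_p(G|Q)|=|\IBr_p(N_G(Q)|Q)|$, then invoke a known vertex-count equality. The paper's proof (given for the $\pi$-version, Corollary~\ref{=}) cites this last equality as Theorem~6.29 of \cite{I2018} or Theorem~6.2 of \cite{IN1995}, rather than framing it via Green correspondence, but the content is the same.
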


Now we consider lifts of Brauer characters in a $p$-solvable group $G$.
For $N\NM G$, it is known that if $\chi\in\Irr(G)$ is a lift,
then the irreducible constituents of $\chi_N$ can fail,
so some additional conditions are necessary
(see \cite{C2006,CL2011,C2012,CL2012, L2010,N2011}, for example).
The following is an easy application of Theorem A, which should be compared with Corollary 1.3 of \cite{C2012} and Theorem 1.1 of \cite{CL2012}, and we also prove its $\pi$-version as Theorem \ref{app}.

\begin{thmA}
Let $G$ be a $p$-solvable group with $p>2$, let $N\NM G$ and let $\chi\in\Irr(G)$ be a lift with Navarro vertex $(Q,\delta)$.
If $\delta_{Q\cap N}$ is stable in $N$, then every irreducible constituent $\psi$ of $\chi_N$
is also a lift. Moreover, if $\lambda$ is stable in $G$, then $G_\psi=G_{\psi^0}$.
\end{thmA}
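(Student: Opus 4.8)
The plan is to deduce the statement from Theorem~A applied inside $N$. Fix an irreducible constituent $\psi$ of $\chi_N$. The constituents of $\chi_N$ form a single $G$-orbit, and neither conclusion changes if $\psi$ is replaced by a $G$-conjugate (nor if $(Q,\delta)$ is replaced by another $G$-conjugate representing the Navarro vertex of $\chi$), so it suffices to treat one such $\psi$. Set $\lambda:=\delta_{Q\cap N}$; this is a linear character of the $p$-subgroup $Q\cap N$ of $N$ which, by hypothesis, is stable in $N$. The crux, established in the next paragraph, is that after these normalizations $\psi\in\Irr(N\,|\,Q\cap N,\lambda)$. Granting this, Theorem~A with $(G,Q,\delta)$ replaced by $(N,Q\cap N,\lambda)$ shows that the canonical bijection $\Irr(N\,|\,Q\cap N,\lambda)\to\IBr_p(N\,|\,Q\cap N)$ sends $\psi$ to $\psi^0$; in particular $\psi^0\in\IBr_p(N)$, so $\psi$ is a lift, and letting $\psi$ run over all constituents gives the first assertion.

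Thus the heart of the theorem is the following descent statement: \emph{if $G$ is $p$-solvable with $p>2$, $N\NM G$, and $\chi\in\Irr(G)$ is a lift with linear Navarro vertex $(Q,\delta)$, then some irreducible constituent of $\chi_N$ has Navarro vertex $N$-conjugate to $(Q\cap N,\delta_{Q\cap N})$.} This is the main obstacle. It can be extracted from the theory of Navarro vertices relative to normal subgroups in the references of the introduction, or proved directly: one first reduces to the case that $\psi$ is $G$-invariant — if it is not, replace $G$ by the inertia group $T=G_\psi$ and $\chi$ by its Clifford correspondent $\tilde\chi\in\Irr(T\,|\,\psi)$, which is again a lift (as $(\tilde\chi^0)^G=\chi^0$ is an irreducible Brauer character, so is $\tilde\chi^0$) with the same Navarro vertex, and notes $\tilde\chi_N=e\psi$ and $N\le T<G$ — and then treats the remaining case, where one may assume $G/N$ is a chief factor, by following the inductive construction of the Navarro nucleus. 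It is precisely in the $p'$-chief-factor case that the hypothesis $p>2$ is needed: in the $\pi$-formulation (where ``lift'' refers to $\pi$-partial characters and the hypothesis reads $2\in\pi$) this makes the relevant $\pi'$-correspondence the Glauberman correspondence, hence canonical, so that the nucleus — and with it the Navarro vertex — descends compatibly to $N$; without it, constituents of restrictions of lifts need not even be lifts.

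For the ``moreover'' part, assume in addition that $\lambda$ is stable in $G$ (equivalently, that $\delta$ is, as $\lambda=\delta_{Q\cap N}$). The inclusion $G_\psi\subseteq G_{\psi^0}$ always holds. For the reverse inclusion it suffices — stabilizers of $G$-conjugate characters being conjugate — to treat a $\psi$ whose Navarro vertex is exactly $(Q\cap N,\lambda)$; let $g\in G_{\psi^0}$. Then $\psi^g$ is again a constituent of $\chi_N$ (since $\chi^g=\chi$), hence a lift by the first part, with Navarro vertex $\bigl((Q\cap N)^g,\lambda^g\bigr)$. By the remark recalled in the introduction, the first coordinate of the Navarro vertex of such a lift is a Green vertex of its $p$-regular restriction, so $(Q\cap N)^g$ and $Q\cap N$ are both Green vertices of $\psi^0=(\psi^g)^0$ and hence $N$-conjugate, say $(Q\cap N)^{gn}=Q\cap N$ with $n\in N$. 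Now conjugation by $gn$ restricts to a permutation of $Q\cap N$, so for each $z\in Q\cap N$ the element $z^{(gn)^{-1}}$ lies in $Q\cap N$ and is $G$-conjugate to $z$; hence the $G$-stability of $\lambda$ forces $\lambda^{gn}=\lambda$, and therefore $\bigl((Q\cap N)^g,\lambda^g\bigr)$ is $N$-conjugate to $(Q\cap N,\lambda)$, i.e. $\psi^g\in\Irr(N\,|\,Q\cap N,\lambda)$. As $\psi$ and $\psi^g$ now both belong to $\Irr(N\,|\,Q\cap N,\lambda)$ and have the same image $\psi^0$ under the bijection of Theorem~A, injectivity gives $\psi^g=\psi$, i.e. $g\in G_\psi$. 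The one genuinely difficult step is the descent statement above; everything else is Clifford theory together with a direct invocation of Theorem~A.
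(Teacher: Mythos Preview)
Your proposal is correct and follows essentially the same route as the paper: isolate the descent statement that some constituent of $\chi_N$ has Navarro vertex $(Q\cap N,\lambda)$ --- which the paper records as a separate lemma and, like you, defers to the Cossey and Cossey--Lewis references for its proof --- and then invoke Theorem~A inside $N$ both to see that $\psi$ is a lift and, via injectivity of the bijection, to obtain $G_\psi=G_{\psi^0}$. Your handling of the stabilizer equality (conjugating the vertex pair $((Q\cap N)^g,\lambda^g)$ back to $(Q\cap N,\lambda)$ by an element of $N$ and using $G$-stability of $\lambda$) is just a cosmetic reorganization of the paper's Frattini-type decomposition $g=xn$ with $x\in N_G(Q\cap N)$.
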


Most of the notation and terminology used in this paper are standard,
which can be found in \cite{I1976,N1998,I2018} for ordinary characters,
Brauer characters and $\pi$-partial characters.

%%%----------------------------------------------
\section{Preliminaries}
For the reader's convenience, we will briefly review the definitions of both the Navarro vertices
for ordinary characters and the vertices for $\pi$-partial characters in $\pi$-separable groups;
see \cite{C2010,N2002} for more details.
We begin by introducing some notation.

Given a finite group $G$, we say that $(H,\theta)$ is a {\bf character pair} of $G$
if $H\le G$ and $\theta\in\Irr(H)$, and we define a partial order on the set of character pairs of $G$ by setting $(H,\theta)\le (K,\psi)$ if $H\le K$ and $\theta$ lies under $\psi$.
Observe that $G$ acts on character pairs via conjugation,
that is, $(H,\theta)^g=(H^g,\theta^g)$ for $g\in G$,
where $\theta^g\in\Irr(H^g)$ is defined by $\theta^g(x^g)=\theta(x)$ for $x\in H$.

Suppose that $G$ is $\pi$-separable for a set $\pi$ of primes.
A {\bf $\pi$-factored normal pair} of $G$ is a character pair $(H,\theta)$,
where $H\NM G$ and $\theta$ is $\pi$-factored (see \cite{I2018} for the definition and properties).
We write $\mathcal{N}(G)$ for the set of $\pi$-factored normal pairs of $G$,
and let $\mathcal{N}^*(G)$ be the set of maximal members of $\mathcal{N}(G)$.
We have the following fundamental result (see Sect. 4 of \cite{C2010}),
which is very similar to Theorem 4.1 of \cite{I2018}.

\begin{lem}\label{syl}
Let $\chi\in\Irr(G)$, where $G$ is a $\pi$-separable group. Then the following hold.

{\rm (a)} There exists a unique normal subgroup $N$ of $G$ maximal with the property that
every irreducible constituent $\theta$ of $\chi_N$ is $\pi$-factored.
In particular, $(N,\theta)\in\mathcal{N}^*(G)$.

{\rm (b)} Let $N$ and $\theta$ be as in (a), and assume that $\theta$ is invariant in $G$.
Then $N=G$ and $\theta=\chi$, so $\chi$ is $\pi$-factored.
\end{lem}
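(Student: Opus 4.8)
The plan is to prove (a) by showing that a suitable family of normal subgroups is closed under products, hence has a unique maximal member, and to deduce (b) from (a) by a Clifford-theoretic reduction to a chief factor.

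For (a), let $\mathcal{S}$ be the set of normal subgroups $M$ of $G$ such that every irreducible constituent of $\chi_M$ is $\pi$-factored. First, $\mathcal{S}\neq\varnothing$: a minimal normal subgroup $V$ of the $\pi$-separable group $G$ is a $\pi$-group or a $\pi'$-group, so every irreducible character of $V$ is $\pi$-special or $\pi'$-special and hence $\pi$-factored, whence $V\in\mathcal{S}$. The key point is that $\mathcal{S}$ is closed under products: if $M,N\in\mathcal{S}$ then $MN\in\mathcal{S}$. Granting this, since $G$ is finite the join of all members of $\mathcal{S}$ again lies in $\mathcal{S}$ and is its unique maximal member, which is the required $N$ (with $N'\le N$ for every $N'\in\mathcal{S}$). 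To prove closure, take an irreducible constituent $\xi$ of $\chi_{MN}$ together with irreducible constituents $\mu$ of $\xi_M$ and $\nu$ of $\xi_N$ lying under $\chi$, and write their unique $\pi$-factorizations $\mu=\mu_\pi\mu_{\pi'}$ and $\nu=\nu_\pi\nu_{\pi'}$. Over $D=M\cap N$ the characters $\mu$ and $\nu$ share an irreducible constituent (one lying under $\xi$); using that restriction of a $\pi$-special character to a normal subgroup has $\pi$-special constituents, that a product of a $\pi$-special and a $\pi'$-special character is irreducible, and uniqueness of $\pi$-factorizations, one checks that the $\pi$-parts of $\mu$ and $\nu$, and also their $\pi'$-parts, are compatible over $D$, and then the ``going up'' theorems for $\pi$-special characters produce a $\pi$-special $\gamma$ and a $\pi'$-special $\gamma'$ in $\Irr(MN)$ with $\gamma\gamma'=\xi$; hence $\xi$ is $\pi$-factored. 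For the last clause, $(N,\theta)\in\mathcal{N}(G)$ is clear; and if $(N,\theta)\le(L,\psi)\in\mathcal{N}(G)$ then, since the irreducible constituents of $\chi_L$ form a single $G$-orbit and $\psi$ is a $\pi$-factored character lying over the constituent $\theta$ of $\chi_N$, the theory of $\pi$-factored characters lying over $\pi$-factored characters (see Section~4 of \cite{C2010}) shows that these constituents are $G$-conjugate to $\psi$, hence all $\pi$-factored; thus $L\in\mathcal{S}$, so $L\le N$, forcing $L=N$ and $\psi=\theta$, i.e.\ $(N,\theta)\in\mathcal{N}^*(G)$. Most of part (a) is in fact the argument of Section~4 of \cite{C2010} (compare Theorem~4.1 of \cite{I2018}) and can simply be quoted.

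For (b), suppose $\theta$ is invariant in $G$; then $\chi_N=e\theta$ for some integer $e\ge 1$ by Clifford's theorem, and I claim $N=G$. If not, pick a chief factor $L/N$ of $G$; since $G$ is $\pi$-separable, $L/N$ is a $\pi$-group or a $\pi'$-group. Write $\theta=\theta_\pi\theta_{\pi'}$ with $\theta_\pi$ $\pi$-special and $\theta_{\pi'}$ $\pi'$-special; both are $G$-invariant. Assume $L/N$ is a $\pi'$-group (the case of a $\pi$-group is symmetric, interchanging $\theta_\pi$ and $\theta_{\pi'}$). Then $\theta_\pi$ extends uniquely to a $\pi$-special character $\widehat{\theta_\pi}\in\Irr(L)$. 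Let $\xi$ be any irreducible constituent of $\chi_L$, so that $\xi_N=f\theta$ for some $f\ge 1$. The restriction of $\xi\,\overline{\widehat{\theta_\pi}}$ to $N$ is $f\,\theta_\pi\theta_{\pi'}\,\overline{\theta_\pi}$, which contains $\theta_{\pi'}$ as a constituent; hence $\xi\,\overline{\widehat{\theta_\pi}}$ has an irreducible constituent $\beta$ lying over $\theta_{\pi'}$. Since $L/N$ is a $\pi'$-group and $\theta_{\pi'}$ is $L$-invariant and $\pi'$-special, a short check on degrees and determinantal orders over subnormal subgroups shows $\beta$ is $\pi'$-special. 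Now $\xi$ is a constituent of $\beta\,\widehat{\theta_\pi}$, which is irreducible (a product of a $\pi'$-special and a $\pi$-special character), so $\xi=\beta\,\widehat{\theta_\pi}$, which is $\pi$-factored. Thus every irreducible constituent of $\chi_L$ is $\pi$-factored, i.e.\ $L\in\mathcal{S}$, contradicting the maximality of $N$. Hence $N=G$, so $\chi$ is $\pi$-factored, and $\chi_N=e\theta$ with $\chi,\theta\in\Irr(G)$ forces $e=1$ and $\chi=\theta$.

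I expect the main obstacle to be the gluing step in part (a): constructing a single $\pi$-special character and a single $\pi'$-special character over $MN$ out of the $\pi$- and $\pi'$-parts of the characters lying below $M$ and below $N$ — these parts agreeing over $M\cap N$ precisely because the chosen characters sit below a common character of $MN$. This is where the ``going up'' machinery for $\pi$-special characters and the uniqueness of $\pi$-factorizations are indispensable; those tools, together with the extension facts used in (b), are all available in \cite{C2010,I2018}, so in the write-up the bulk of (a) can be handled by reference to those sources.
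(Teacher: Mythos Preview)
The paper does not give its own proof of this lemma at all: it simply states the result with a parenthetical reference ``(see Sect.~4 of \cite{C2010})'' and notes the similarity to Theorem~4.1 of \cite{I2018}. Your proposal supplies exactly the kind of argument those references contain --- the closure-under-products characterization of the maximal normal subgroup for part~(a), and the chief-factor extension argument (essentially Corollary~4.5 of \cite{I2018}) for part~(b) --- and you yourself observe that the bulk of (a) ``can simply be quoted'' from those sources. So your approach is the same as the paper's, only with the cited argument unpacked; for the final write-up you could, as the paper does, just cite \cite{C2010,I2018} and move on.
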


Now we can define the {\bf norma nucleus} $(W,\gamma)$ for any $\chi\in\Irr(G)$ recursively, where $G$ is a $\pi$-separable group. If $\chi$ is $\pi$-factored, let $(W,\gamma)=(G,\chi)$.
If $\chi$ is not $\pi$-factored, let $(N,\theta)$ be as in the above Lemma.
Then $\theta$ is not $G$-invariant, so $(G_\theta,\chi_\theta)<(G,\chi)$, where $G_\theta$ is the inertia group of $\theta$ in $G$ and $\chi_\theta$ is the Clifford correspondent of $\chi$ over $\theta$.
We define $(W,\gamma)$ to be any normal nucleus for $\chi_\theta$.
Notice that $(W,\gamma)$ is uniquely determined by $\chi$ up to conjugacy in $G$,
and that $\gamma$ is $\pi$-factored with $\gamma^G=\chi$.
Moreover, let $Q$ be a Hall $\pi'$-subgroup of $W$ and let $\delta$ be the restriction to $Q$ of the $\pi'$-special factor $\gamma_{\pi'}$ of $\gamma$.
Then we say that $(Q,\delta)$ is a {\bf Navarro vertex} for $\chi$.
It is clear that all of the Navarro vertices for $\chi$ are conjugate in $G$
and as before, we write $\Irr(G|Q,\delta)$ for the set of all members of $\Irr(G)$ having Navarro vertex $(Q,\delta)$.

Continuing to assume that $G$ is $\pi$-separable.
Write $G^0$ to denote the set of $\pi$-elements of $G$,
and let $\chi^0$ be the restriction of $\chi$ to $G^0$, where $\chi$ is any character of $G$.
In this situation, we say that $\chi^0$ is a {\bf $\pi$-partial character} of $G$.
A $\pi$-partial character of $G$ is {\bf irreducible} if it cannot be written as a sum of two $\pi$-partial characters, and we write $\I(G)$ for the set of irreducible $\pi$-partial characters of $G$. Also, a character $\chi\in\Irr(G)$ is said to be a {\bf lift} if $\chi^0\in\I(G)$,
and $\chi$ is called a lift of $\varphi$ if $\chi^0=\varphi\in\I(G)$.

Furthermore, we define a {\bf vertex} for $\varphi\in\I(G)$ to be any Hall $\pi'$-subgroup of some subgroup $U\le G$ such that there exists $\mu\in\I(U)$, where $\mu^G=\varphi$ and $\mu(1)$ is a $\pi$-number. We use $\I(G|Q)$ to denote the set of those
irreducible $\pi$-partial characters of $G$ with vertex $Q$.
It is known that the vertex for $\varphi$ is unique up to $G$-conjugacy
(see Theorem 5.17 of \cite{I2018}),
and that if $\pi=p'$, then $\I(G)=\IBr_p(G)$ by the Fong-Swan theorem, so that the vertices for irreducible $\pi$-partial characters are the same as the Green vertices for irreducible Brauer characters.

As expected, the vertices for $\pi$-partial characters behave well with respect to normal subgroups.
\begin{lem}\label{C2009-3.2}
 Suppose that $G$ is $\pi$-separable and $\varphi\in\I(G)$.
 Let $N$ be a normal subgroup of $G$. If $\theta\in\I(N)$ is a constituent of $\varphi_N$,
 then there is a vertex $Q$ for $\varphi$ such that $Q\cap N$ is a vertex for $\theta$.
\end{lem}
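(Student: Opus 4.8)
The plan is to induct on $|G:N|$, the case $N=G$ being trivial since then $\theta=\varphi$ and any vertex of $\varphi$ serves. Assume $N<G$, and suppose first that $\theta$ is not $G$-invariant. Put $T=G_\theta$, so that $N\le T<G$, and let $\psi\in\I(T)$ be the Clifford correspondent of $\varphi$ over $\theta$, so that $\psi^G=\varphi$ and $\theta$ is a constituent of $\psi_N$ (Clifford theory of $\pi$-partial characters, \cite{I2018}). Since $|T:N|<|G:N|$, induction gives a vertex $R$ of $\psi$ with $R\cap N$ a vertex of $\theta$. Writing $R$ as a Hall $\pi'$-subgroup of some $V\le T$ with $\nu\in\I(V)$, $\nu^T=\psi$ and $\nu(1)$ a $\pi$-number, transitivity of induction gives $\nu^G=\varphi$ with $\nu(1)$ still a $\pi$-number, so $R$ is also a vertex of $\varphi$ and we may take $Q=R$. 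Hence we may assume $\theta$ is $G$-invariant.

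Next I would reduce to the case that $G/N$ is a chief factor of $G$. If it is not, choose $M$ with $N<M\NM G$ and $M/N$ a chief factor of $G$, and pick a constituent $\theta^*\in\I(M)$ of $\varphi_M$ lying over $\theta$ (after replacing $\theta^*$ by a $G$-conjugate if necessary, which is legitimate because the constituents of $\varphi_N$ form a single $G$-orbit and $\varphi$ is $G$-invariant). Applying the inductive hypothesis to $(G,M,\varphi,\theta^*)$ yields a vertex $Q$ of $\varphi$ with $Q\cap M$ a vertex of $\theta^*$, and applying it to $(M,N,\theta^*,\theta)$ yields a vertex $R$ of $\theta^*$ with $R\cap N$ a vertex of $\theta$. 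Since $Q\cap M$ and $R$ are $M$-conjugate vertices of $\theta^*$, write $R=(Q\cap M)^m$ with $m\in M$; as $M\NM G$, the conjugate $Q^m$ is a vertex of $\varphi$ with $Q^m\cap M=(Q\cap M)^m=R$ and $Q^m\cap N=(Q^m\cap M)\cap N=R\cap N$ a vertex of $\theta$. So we may assume that $G/N$ is elementary abelian of order $r^a$ for a prime $r$; in particular $G/N$ is a $\pi$-group or a $\pi'$-group.

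For this residual case, the definition of a vertex gives that every vertex of $\varphi$ has order $|G|_{\pi'}/\varphi(1)_{\pi'}$ and every vertex of $\theta$ has order $|N|_{\pi'}/\theta(1)_{\pi'}$, while $\varphi(1)$ divides $|G:N|\,\theta(1)$; when $r\in\pi$ these two orders coincide and every vertex of $\varphi$, being a $\pi'$-group, is contained in $N$. The key observation is that, since $\theta$ is $G$-invariant, if $(V,\nu)$ realizes a vertex $Q_\theta=\Hall_{\pi'}(V)$ of $\theta$ --- so $V\le N$, $\nu\in\I(V)$, $\nu^N=\theta$ and $\nu(1)$ a $\pi$-number --- then $\nu^G=\theta^G$ has $\varphi$ among its constituents, because the constituents of $\theta^G$ are precisely the members of $\I(G|\theta)$. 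Thus $\nu$ lies under $\varphi$, and the problem reduces to promoting $(V,\nu)$ to a pair $(U,\mu)$ with $V\le U\le G$, $\mu\in\I(U)$, $\mu^G=\varphi$, $\mu(1)$ a $\pi$-number and $\Hall_{\pi'}(U)\cap N=Q_\theta$; then $Q=\Hall_{\pi'}(U)$ is the vertex of $\varphi$ we want. Alternatively, one may first pass to an isomorphic $\pi$-partial character triple $(\Gamma,Z,\lambda)$ in which $Z$ is central in $\Gamma$ and $\lambda$ is faithful and linear: then $\Gamma/Z\cong G/N$ is abelian, so $[\Gamma,\Gamma]\le Z\le Z(\Gamma)$ and $\Gamma$ is nilpotent of class at most $2$, the image of $\varphi$ in $\I(\Gamma)$ is afforded by an irreducible character of the Hall $\pi$-subgroup of $\Gamma$ and hence has $\pi$-degree, its vertex is the unique Hall $\pi'$-subgroup of $\Gamma$, and that subgroup meets $Z$ in the Hall $\pi'$-subgroup of $Z$, which is a vertex of $\lambda$; one then transports the conclusion back.

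I expect this last step --- relating a vertex-realizing pair for $\theta$ to one for $\varphi$, or showing that the character-triple isomorphism respects vertices --- to be the main obstacle, precisely because a subgroup witnessing a vertex of $\varphi$ need not contain $N$, so the naive compatibility of Clifford theory (or of character triples) with the overgroups of $N$ does not by itself control the vertex. Carrying it out should rely on Isaacs' explicit construction of the $\pi$-partial character nucleus and on the fact that this construction behaves well under passage to normal subgroups (cf.\ the vertex theory in \cite{I2018}); the remaining ingredients --- the $\pi$-partial Clifford correspondence and the existence of the chief factor $M$ --- are routine.
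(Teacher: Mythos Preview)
The paper does not supply a proof; it simply cites Theorem~3.2 of \cite{C2009}. Your attempt at a self-contained inductive argument has correct reductions: the Clifford step (when $\theta$ is not $G$-invariant) and the intermediate-normal-subgroup step (reducing to $G/N$ a chief factor) are both sound, and the conjugation trick in Step~2 to align the two vertices is clean.

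The gap is Step~3, which you yourself flag as the obstacle, and neither sketch you offer closes it. In the ``promotion'' approach you have a vertex-realizing pair $(V,\nu)$ for $\theta$ but no mechanism to produce from it a pair $(U,\mu)$ for $\varphi$ with $\Hall_{\pi'}(U)\cap N=Q_\theta$; even in the case $r\in\pi$, knowing that a vertex $Q$ of $\varphi$ lies in $N$ and has the correct order does not make $Q$ a vertex of $\theta$, since vertices are determined only up to conjugacy, not by order alone. In the character-triple approach the transport step is the problem: a $\pi$-partial character-triple isomorphism relates only characters of subgroups containing $N$ (resp.\ $Z$), whereas a vertex is witnessed by a subgroup $U$ that typically does not contain $N$, so there is no a~priori reason the isomorphism should respect vertices. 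Cossey's proof in \cite{C2009} supplies exactly the missing ingredient---the compatibility of the nucleus/vertex construction with passage to normal subgroups---and completing your Step~3 would essentially amount to reproducing that argument.
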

\begin{proof}
See Theorem 3.2 of \cite{C2009}.
\end{proof}

We need the following result, which is a $\pi$-analogue of Lemma \ref{syl}.
\begin{lem}\label{max}
Suppose that $G$ is $\pi$-separable, and let $\varphi\in\I(G)$.
Then there exists a unique maximal normal subgroup $N$ of $G$ such that the irreducible constituents of $\varphi_N$ have $\pi$-degree.
Moreover, if $\varphi_N=e\theta$ for some $\theta\in\I(N)$ and integer $e$,
then $N=G$, so that $\varphi$ has $\pi$-degree.
\end{lem}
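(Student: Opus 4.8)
The plan is to imitate the proof of the ordinary-character statement Lemma~\ref{syl} (compare Theorem~4.1 of \cite{I2018}), replacing ``$\pi$-factored'' throughout by ``of $\pi$-degree'' and ordinary Clifford theory by its $\pi$-partial analogue. Call a normal subgroup $M$ of $G$ \emph{admissible} if every irreducible constituent of $\varphi_M$ has $\pi$-degree. The trivial subgroup is admissible, so admissible subgroups exist; fix a maximal one, $N$. Since admissibility is inherited by normal subgroups (if $A\NM W$ with $W$ admissible, then each constituent of $\varphi_A$ has degree dividing the degree of some irreducible constituent of $\varphi_W$, hence has $\pi$-degree), the uniqueness of the maximal admissible subgroup is equivalent to the assertion (I) that the admissible subgroups of $G$ are closed under products. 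The ``moreover'' clause is the assertion (II) that if $\varphi_N=e\theta$, so that $\theta\in\I(N)$ is $G$-invariant and of $\pi$-degree, then $N=G$ and $\varphi$ has $\pi$-degree.

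For (II), suppose $N<G$ and choose $N\le M\NM G$ with $M/N$ a chief factor of $G$; since $G$ is $\pi$-separable, $M/N$ is either a $\pi$-group or a $\pi'$-group. I claim $M$ is again admissible, which contradicts the maximality of $N$. Any constituent $\xi\in\I(M)$ of $\varphi_M$ lies over the $M$-invariant $\theta$, so $\xi(1)=f\theta(1)$ for the ramification $f$ of $\xi$ over $\theta$. When $M/N$ is a $\pi$-group, an isomorphism of $\pi$-partial character triples replacing $\theta$ by a faithful linear character (so that $M/N$ becomes the index of a center) shows that $f$ divides $|M/N|$ and hence that $\xi(1)$ is a $\pi$-number; when $M/N$ is a $\pi'$-group, one has $M^{0}=N^{0}$, and the $\pi$-partial analogue of the fact that characters extend (uniquely, here) across a $\pi'$-quotient shows that $\theta$ has a unique extension to $\I(M)$, so $f=1$ and $\xi(1)=\theta(1)$. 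Either way $M$ is admissible, a contradiction; hence $N=G$.

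For (I), I argue by induction on $|G|$. Let $M_1,M_2\NM G$ be admissible and put $K=M_1M_2$. If $K<G$, then for each constituent $\xi\in\I(K)$ of $\varphi_K$ both $M_1$ and $M_2$ are admissible for $\xi$ in $K$, so by the inductive hypothesis applied in $K$ (together with the inheritance remark above) $K$ is admissible for $\xi$; hence $K$ is admissible for $\varphi$. This reduces (I) to the case $M_1M_2=G$ with $M_1,M_2$ proper, where it remains to prove $\varphi$ has $\pi$-degree. Picking a common constituent $\mu\in\I(M_1\cap M_2)$ below constituents $\theta$ of $\varphi_{M_1}$ and $\lambda$ of $\varphi_{M_2}$, the divisibilities $\mu(1)\mid\theta(1)$ and $\mu(1)\mid\lambda(1)$ force the $M_i$-orbit sizes of $\mu$, and so $[G:G_\mu]$, to be $\pi$-numbers; replacing $(G,\varphi)$ by the Clifford correspondent over $\mu$ and then applying a $\pi$-partial character-triple isomorphism to make $\mu$ linear and central, one reaches the situation in which $M_1\cap M_2=Z(G)$ and $G/Z(G)$ is the direct product of $M_1/Z(G)$ and $M_2/Z(G)$; in this (twisted) central-product situation the degree of $\varphi$ is computed from those of $\theta$ and $\lambda$ and is seen to be a $\pi$-number. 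This establishes (I), hence the uniqueness of the maximal admissible subgroup, and finishes the proof.

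The main difficulty will be supplying the $\pi$-partial Clifford-theoretic ingredients on which (I) and (II) rest: the divisibility of the ramification $f$ by $|M/N|$ for $\pi$-partial characters over an invariant $\theta$; the existence of $\pi$-partial character-triple isomorphisms reducing an invariant $\theta$ to a linear one; the extendibility statements across $\pi$- and $\pi'$-quotients; the compatibility of the Clifford correspondence with admissibility; and the final central-product degree computation. All of this is available in Isaacs' $B_\pi$-framework, but it must be handled with care, since — unlike for ordinary characters — irreducibility of a $\pi$-partial character depends on the ambient group and does not transfer naively to subgroups or across $\pi'$-quotients.
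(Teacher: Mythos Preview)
The paper does not actually prove this lemma: its entire argument is the sentence ``This is the $\pi$-version of Lemma~6.1 of \cite{N2002}, and the proof is parallel.'' Your proposal is therefore already far more detailed than what the paper supplies, and your overall plan---define admissible normal subgroups, show they are closed under products for uniqueness, and handle the invariant case via a chief-factor argument---is the right shape and matches how such results are proved in the literature.

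That said, the direct $\pi$-partial route you sketch (character-triple isomorphisms for $\I$, the central-product degree computation, etc.) is heavier than necessary, and you yourself flag these as the hard points. A much shorter path, and almost certainly the one Navarro's Lemma~6.1 and the paper have in mind, is to pass once and for all to the $\B$-lift $\chi\in\B(G)$ of $\varphi$. Since constituents of $\chi_M$ lie in $\B(M)$ for every $M\NM G$, and a $\B$-character has $\pi$-degree if and only if it is $\pi$-special, a normal subgroup $M$ is admissible for $\varphi$ precisely when every constituent of $\chi_M$ is $\pi$-special. Closure under products and the ``invariant $\Rightarrow$ extends over a chief factor'' step are then standard facts about $\pi$-special characters (see \cite[Chapter~2]{I2018}), and no $\pi$-partial character-triple machinery is needed. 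This bypasses all of the difficulties you list in your final paragraph; in particular, in your step~(II) for a $\pi$-chief factor $M/N$, the $\pi$-special lift $\hat\theta$ of $\theta$ is $M$-invariant and every member of $\Irr(M\,|\,\hat\theta)$ is $\pi$-special, which immediately gives that every $\xi\in\I(M\,|\,\theta)$ has $\pi$-degree, without any appeal to divisibility of the ramification index.
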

\begin{proof}
This is the $\pi$-version of Lemma 6.1 of \cite{N2002}, and the proof is parallel.
\end{proof}

The next result, which can fail if the assumption that $2\in\pi$ is dropped,
is key to the proof of Theorem D from the introduction.
\begin{lem}\label{v-n}
Let $G$ be a $\pi$-separable group with $2\in\pi$, and suppose that $\chi\in\Irr(G)$
is a lift with Navarro vertex $(Q,\delta)$.
If $N\NM G$, then some irreducible constituent of $\chi_N$ has Navarro vertex
$(Q\cap N,\delta_{Q\cap N})$.
\end{lem}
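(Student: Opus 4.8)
The plan is to induct on $|G|$ and follow the recursive construction of the Navarro vertex. Let $(W,\gamma)$ be a normal nucleus for $\chi$, so that $\gamma$ is $\pi$-factored, $\gamma^G=\chi$, and $(Q,\delta)$ comes from a Hall $\pi'$-subgroup $Q$ of $W$ together with $\delta=(\gamma_{\pi'})_Q$. First I would dispose of the case where $\chi$ is already $\pi$-factored: then $W=G$, $\gamma=\chi$, and I must show some constituent $\psi$ of $\chi_N$ has Navarro vertex $(Q\cap N,\delta_{Q\cap N})$. Since $\chi$ is $\pi$-factored, write $\chi=\chi_\pi\chi_{\pi'}$; the constituents of $(\chi_{\pi'})_N$ are $\pi'$-special, and because $\chi$ is a lift, Lemma \ref{C2009-3.2} applied to $\varphi=\chi^0$ gives a vertex of the relevant constituent of $\chi^0_N$ of the form $Q\cap N$ for a suitable choice of vertex $Q$ of $\chi^0$; here the hypothesis $2\in\pi$ is what guarantees that $Q$ can simultaneously be taken to be a Hall $\pi'$-subgroup of $W=G$ carrying the $\pi'$-special part (this is the point where the lemma fails for $2\notin\pi$, since $\pi'$-special characters of $2$-groups need not restrict irreducibly). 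One then checks that the corresponding constituent $\psi$ of $\chi_N$, obtained as $\psi_\pi\cdot(\text{constituent of }(\chi_{\pi'})_N)$, is a lift with the asserted vertex.

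For the inductive step, suppose $\chi$ is not $\pi$-factored. Let $(M,\theta)\in\mathcal N^*(G)$ be the unique maximal $\pi$-factored normal pair under $\chi$ as in Lemma \ref{syl}, so $\theta$ is not $G$-invariant, $(G_\theta,\chi_\theta)<(G,\chi)$, $\chi_\theta^G=\chi$, and $(W,\gamma)$ is a normal nucleus for $\chi_\theta$ inside $G_\theta$. The key is to relate the chain defining the vertex of $\chi$ to that of a constituent $\psi$ of $\chi_N$. I would intersect everything with $N$: set $T=G_\theta$, pick a constituent $\eta$ of $\theta_{M\cap N}$, and note $\eta$ is $\pi$-factored in $M\cap N\NM N$. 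One shows that the maximal $\pi$-factored normal pair of $N$ under $\psi$ (for an appropriate constituent $\psi$ of $\chi_N$) can be taken to be $(M\cap N,\eta)$, or at least lies above it, and then that the inertia group $N_\eta$ and Clifford correspondent of $\psi$ over $\eta$ fit into $G_\theta\cap N$. Applying induction to $\chi_\theta\in\Irr(G_\theta)$ with the normal subgroup $G_\theta\cap N$ yields a constituent $\gamma'$ of $(\chi_\theta)_{G_\theta\cap N}$ with Navarro vertex $((Q\cap N), \delta_{Q\cap N})$ — here using that $W\cap N$ is a Hall $\pi'$-subgroup of it and $Q\cap N$ may be chosen inside $Q$ — and one checks $(\gamma')^N$ is the desired constituent $\psi$ of $\chi_N$ with the correct vertex.

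The main obstacle is the bookkeeping needed to guarantee that the \emph{same} conjugate $Q$ of the vertex of $\chi$ works for both $\chi$ and the chosen constituent $\psi$ of $\chi_N$ — i.e., making all the Clifford-theoretic choices (the constituents $\theta$, $\eta$, the inertia subgroups, the normal nuclei) compatible so that intersecting with $N$ at each stage commutes with passing to the nucleus. This is exactly where $2\in\pi$ is essential: it forces the relevant $\pi'$-special characters (of $\pi'$-groups of odd order) to have well-behaved restrictions and canonical extensions, so that $\delta_{Q\cap N}$ is again an irreducible linear $\pi'$-special character of the $\pi'$-group $Q\cap N$ and arises as the $\pi'$-part of the nucleus of $\psi$. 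I expect the proof to mirror closely the argument for Lemma 6.2 or the corresponding normal-subgroup result in \cite{N2002}, with Lemma \ref{C2009-3.2} and Lemma \ref{max} substituting for their Brauer-character counterparts, and with Lemma \ref{syl} controlling the recursion; no genuinely new idea beyond careful synchronization of choices is required.
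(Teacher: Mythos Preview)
The paper does not give a self-contained argument here: its entire proof is the sentence ``It follows by the same argument used to prove Theorem 3.1 of \cite{C2012}, together with Theorem 1.2 of \cite{CL2012}.'' So your inductive outline along the normal-nucleus chain is indeed the intended shape of the argument, and the bookkeeping you describe (intersecting the chain with $N$, matching Clifford correspondents) is what Theorem 3.1 of \cite{C2012} carries out in the odd-order setting.

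There is, however, a genuine gap in your plan, and it is exactly the place where the paper invokes \cite{CL2012}. In the inductive step you pass from $\chi$ to its Clifford correspondent $\chi_\theta\in\Irr(G_\theta)$ and then ``apply induction to $\chi_\theta$ with the normal subgroup $G_\theta\cap N$''. But the inductive hypothesis requires $\chi_\theta$ to be a \emph{lift}, and nothing you have written guarantees this. The passage from $\chi$ a lift to $\chi_\theta$ a lift is not automatic; it can fail when $2\notin\pi$, and proving it when $2\in\pi$ is precisely the content of Theorem~1.2 of \cite{CL2012}. So the hypothesis $2\in\pi$ is used not in the base case to control restrictions of $\pi'$-special characters (as you suggest---restriction of a $\pi'$-special character to a Hall $\pi'$-subgroup is irreducible regardless of parity), but to ensure that the ``lift'' property is inherited down the nucleus chain so that the induction is legitimate. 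Your diagnosis of where $2\in\pi$ enters is therefore off target.

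A second, smaller gap: at the end of the inductive step you assert that if $\gamma'$ is the constituent of $(\chi_\theta)_{G_\theta\cap N}$ produced by induction, then $(\gamma')^N$ is irreducible, is a constituent of $\chi_N$, and retains the Navarro vertex $(Q\cap N,\delta_{Q\cap N})$. Each of these needs an argument (compatibility of the maximal $\pi$-factored normal pair of $N$ with $M\cap N$, and why induction from $G_\theta\cap N$ to $N$ preserves the nucleus), and this is again handled in \cite{C2012}. Your sketch treats it as routine synchronization, but it is part of the substance of the cited theorem.
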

\begin{proof}
It follows by the same argument used to prove Theorem 3.1 of \cite{C2012},
together with Theorem 1.2 of \cite{CL2012}.
\end{proof}

At the end of this section, we give an extendibility criterion of characters from Hall subgroups,
which plays a crucial role in our proof of Theorem A.

\begin{lem}\label{ext}
Let $Q$ be a Hall $\pi'$-subgroup of a $\pi$-separable group $G$,
and let $\delta\in\Irr(Q)$.
If $\delta$ is stable in $G$, then there exists a unique $\pi'$-special character $\alpha$ of $G$
such that $\alpha_Q=\delta$.
\end{lem}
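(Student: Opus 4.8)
The plan is to argue by induction on $|G|$. Uniqueness needs no induction: restriction to a Hall $\pi'$-subgroup is an injective map from the set of $\pi'$-special characters of $G$ into $\Irr(Q)$ (a standard property of $\pi$-special characters; see \cite{I2018}), so any two $\pi'$-special characters that restrict to $\delta$ on $Q$ coincide. Once existence is established, this also shows the character produced below does not depend on the choices made in the induction.

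For existence one may assume $Q<G$. Choose a minimal normal subgroup $N$ of $G$; as a chief factor of a $\pi$-separable group it is either a $\pi$-group or a $\pi'$-group. If $N$ is a $\pi$-group, then every $\pi'$-special character of $G$ is trivial on $N$ (it has $\pi'$-degree, so the irreducible constituents of its restriction to $N$ are linear with $\pi'$-order determinant, hence trivial), and $\delta$ remains stable in $G/N$: if $x,y\in Q$ have $G/N$-conjugate images then, working inside $N\langle x\rangle$, where the Hall $\pi'$-subgroups are conjugate, one finds that $x$ and $y$ are already $G$-conjugate. Since $QN/N\cong Q$ is a Hall $\pi'$-subgroup of $G/N$, the inductive hypothesis provides a $\pi'$-special character of $G/N$ restricting to the image of $\delta$; its inflation to $G$ is still $\pi'$-special and restricts to $\delta$.

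So assume $N\le Q$ is a $\pi'$-group, and fix an irreducible constituent $\lambda$ of $\delta_N$. Stability of $\delta$ makes $\delta_N$ a $G$-invariant class function on $N$, so the multiplicity of an irreducible character of $N$ in $\delta_N$ depends only on its $G$-orbit; since only the $Q$-conjugates of $\lambda$ occur in $\delta_N$, the $G$-orbit and $Q$-orbit of $\lambda$ coincide. Hence $T:=I_G(\lambda)$ satisfies $|G:T|=|Q:Q\cap T|$, and $Q_0:=Q\cap T=I_Q(\lambda)$ is a Hall $\pi'$-subgroup of $T$. If $T<G$, let $\psi\in\Irr(Q_0\mid\lambda)$ be the Clifford correspondent of $\delta$, so $\psi^Q=\delta$; from the projection formula $\psi(u)=\frac{\lambda(1)}{|N|}\sum_{n\in N}\overline{\lambda(n)}\,\delta(un)$ for $u\in Q_0$, together with $\lambda^t=\lambda$ and the stability of $\delta$, one checks that $\psi$ is stable in $T$, so induction in $T$ yields a $\pi'$-special $\beta\in\Irr(T)$ with $\beta_{Q_0}=\psi$; then $\alpha:=\beta^G$ is irreducible, $\pi'$-special (Clifford induction over an invariant $\pi'$-special normal constituent preserves $\pi'$-specialness), and $\alpha_Q=\psi^Q=\delta$ by Mackey together with a degree count. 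If instead $\lambda$ is $G$-invariant, extend it to a $\pi'$-special $\hat\lambda\in\Irr(G)$ (Gajendragadkar's extendibility theorem) and write $\delta=\hat\lambda_Q\cdot\tau$ with $\tau\in\Irr(Q/N)$ via Gallagher; since $\sum_{n\in N}|\hat\lambda(xn)|^2=|N|$ one can choose, for each relevant $x$, a representative $x'$ of $xN$ with $\hat\lambda(x')\neq0$, and then the stability of $\delta$ and the fact that $\hat\lambda$ is a genuine character of $G$ show $\tau$ is stable in $G/N$. Induction in $G/N$ gives a $\pi'$-special $\beta_0\in\Irr(G/N)$ with $(\beta_0)_{Q/N}=\tau$, and $\alpha:=\hat\lambda\cdot\beta_0$ (inflating $\beta_0$ to $G$) is irreducible by Gallagher, $\pi'$-special since every irreducible constituent of a product of $\pi'$-special characters is $\pi'$-special, and satisfies $\alpha_Q=\hat\lambda_Q\cdot\tau=\delta$.

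The step I expect to be the main obstacle is propagating the stability hypothesis through the two Clifford-type reductions — to $T$ via the correspondent $\psi$, and to $G/N$ via the Gallagher factor $\tau$ — since a direct comparison of character values breaks down exactly where the relevant character vanishes; this is what forces the first reduction through the projection formula and the second through a careful choice of coset representative. All the other ingredients (irreducibility and injectivity of restriction to a Hall $\pi'$-subgroup, extendibility of invariant $\pi'$-special characters of normal subgroups, preservation of $\pi'$-specialness under inflation and under Clifford induction, and closure of the $\pi'$-special characters under taking constituents of products) are standard facts about $\pi$-special characters available in \cite{I2018}.
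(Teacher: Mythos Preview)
The paper itself does not argue this directly; it simply refers to Theorem~3.6 and Corollary~3.15 of \cite{I2018}. Your uniqueness argument, Case~A, and sub-case~B1 (including the projection-formula check that $\psi$ is stable in $T$, and the Mackey computation $\alpha_Q=\psi^Q=\delta$ via $G=QT$) are all sound.

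The gap is in sub-case~B2. When $\lambda$ is $G$-invariant you claim it extends to a $\pi'$-special $\hat\lambda\in\Irr(G)$, citing ``Gajendragadkar's extendibility theorem'', but no such general result is available: the extension theorem for $\pi'$-special characters requires $G/N$ to be a $\pi$-group, which is not given here. Concretely, take $G=\SL_2(3)$ with $\pi=\{3\}$, so that $Q=Q_8\NM G$, and let $\delta\in\Irr(Q)$ be the faithful degree-$2$ character. Then $\delta$ is stable in $G$ (the six order-$4$ elements of $Q$ form a single $G$-class on which $\delta$ vanishes), and the \emph{only} minimal normal subgroup is $N=Z(G)\cong C_2$, forcing you into sub-case~B2 with $\lambda$ the nontrivial character of $N$. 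Since $[G,G]=Q_8\supseteq N$, every linear character of $G$ is trivial on $N$, so $\lambda$ has no extension to $G$ at all, and your Gallagher reduction to $G/N$ cannot even begin. (The lemma still holds in this example: exactly one of the three degree-$2$ members of $\Irr(G)$ has trivial determinant, and that is the required $2$-special character restricting to $\delta$.) The cure --- and this is essentially how the cited proof in \cite{I2018} runs --- is to descend through a \emph{maximal} normal subgroup $M$ rather than ascend from a minimal one: when $G/M$ is a $\pi$-group one has $Q\le M$, the inductively obtained $\pi'$-special $\beta\in\Irr(M)$ with $\beta_Q=\delta$ is $G$-invariant (by stability of $\delta$ and injectivity of restriction to $Q$), and then the extension theorem applies legitimately.
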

\begin{proof}
See Theorem 3.6 and Corollary 3.15 of \cite{I2018}.
\end{proof}

%%%-------------------------------------------------------
\section{Main results}
We begin with a result concerning stable characters, which is fundamental for our proofs.

\begin{lem}\label{inv}
Let $\gamma\in\Irr(W)$ be $\pi$-factored, where $W$ is a subgroup of a $\pi$-separable group $G$, and suppose that $\theta\in\Irr(N)$ lies under $\gamma$, where $N\NM G$ with $N\le W$.
Let $Q$ be a Hall $\pi'$-subgroup of $W$, and write $\delta=(\gamma_{\pi'})_Q$.
If $\delta$ is a $G$-stable linear character, then $\theta$ is $\pi$-factored, and its $\pi'$-special factor $\theta_{\pi'}$ is a $G$-invariant linear character.
\end{lem}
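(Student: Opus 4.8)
The plan is to pin down $\gamma_{\pi'}$ rather explicitly, read off the assertions about $\theta$ from it, and then establish the $G$-invariance of $\theta_{\pi'}$ by transporting the stability of $\delta$ through a Hall $\pi'$-subgroup. Note first that $N\NM W$, since $N\NM G$ and $N\le W$. The key initial observation is that $\gamma_{\pi'}$ is \emph{linear}: restriction to a subgroup does not change the degree, so $\gamma_{\pi'}(1)=(\gamma_{\pi'})_Q(1)=\delta(1)=1$. Writing $\gamma=\gamma_\pi\gamma_{\pi'}$ and restricting to $N\NM W$ gives $\gamma_N=(\gamma_{\pi'})_N\cdot(\gamma_\pi)_N$, where $(\gamma_{\pi'})_N$ is a single $\pi'$-special linear character and, by the standard behaviour of $\pi$-special characters under restriction to normal subgroups, every irreducible constituent of $(\gamma_\pi)_N$ is $\pi$-special. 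Hence the character $\theta$ lying under $\gamma$ has the form $\theta=(\gamma_{\pi'})_N\otimes\mu$ with $\mu\in\Irr(N)$ $\pi$-special; by uniqueness of the $\pi$-factorization, $\theta$ is $\pi$-factored with $\theta_{\pi'}=(\gamma_{\pi'})_N$. In particular $\theta_{\pi'}$ is linear, it is $W$-invariant (being the restriction to $N\NM W$ of a character of $W$), and $(\theta_{\pi'})_{Q\cap N}=(\gamma_{\pi'})_{Q\cap N}=\delta_{Q\cap N}$.

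It remains to prove that $\theta_{\pi'}$ is $G$-invariant, which is the crux. Since $\theta_{\pi'}$ is a linear $\pi'$-special character, its order is a $\pi'$-number, so $N/\Ker(\theta_{\pi'})$ is a $\pi'$-group and therefore $O^{\pi'}(N)\le\Ker(\theta_{\pi'})$. On the other hand $Q\cap N$ is a Hall $\pi'$-subgroup of $N$ (because $N\NM W$ and $Q\in\Hall_{\pi'}(W)$), so $N=(Q\cap N)\,O^{\pi'}(N)$. Now fix $u\in Q\cap N$ and $g\in G$. Then $u^g$ is a $\pi'$-element of $N\le W$, so it lies in some Hall $\pi'$-subgroup $Q^w$ of $W$ with $w\in W$; thus $u^{gw^{-1}}\in Q$, while also $u^{gw^{-1}}\in N$, so $u^{gw^{-1}}\in Q\cap N$. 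Since $u$ and $u^{gw^{-1}}$ are $G$-conjugate elements of $Q$ and $\delta$ is stable in $G$, we have $\delta(u)=\delta(u^{gw^{-1}})$; combining this with the $W$-invariance of $\theta_{\pi'}$ and the equality $(\theta_{\pi'})_{Q\cap N}=\delta_{Q\cap N}$ gives
$$\theta_{\pi'}(u^g)=\theta_{\pi'}\bigl(u^{gw^{-1}}\bigr)=\delta\bigl(u^{gw^{-1}}\bigr)=\delta(u)=\theta_{\pi'}(u).$$
Finally, writing an arbitrary $x\in N$ as $x=uv$ with $u\in Q\cap N$ and $v\in O^{\pi'}(N)\le\Ker(\theta_{\pi'})$, and using that $O^{\pi'}(N)$ is characteristic in $N$ (so $v^g\in O^{\pi'}(N)$) together with the linearity of $\theta_{\pi'}$, we get $\theta_{\pi'}(x^g)=\theta_{\pi'}(u^g)=\theta_{\pi'}(u)=\theta_{\pi'}(x)$. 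Thus $\theta_{\pi'}$ is $G$-invariant, as required.

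I expect the $G$-invariance step to be the only delicate point. The subtlety is that conjugating $u\in Q\cap N$ by an element of $G$ keeps it inside $N$ but can push it out of $Q$, so the stability hypothesis on $\delta$ is not directly applicable; the device that rescues the argument is to pull it back into $Q$ by a further conjugation \emph{inside} $W$, which is harmless precisely because $\theta_{\pi'}$ has already been shown to be $W$-invariant. The other ingredients — linearity of $\gamma_{\pi'}$ and of $\theta_{\pi'}$, the fact that $\theta$ is $\pi$-factored, and the reduction from agreement on $Q\cap N$ to agreement on all of $N$ — are routine once one invokes the standard restriction behaviour of $\pi$-special characters and the inclusion $O^{\pi'}(N)\le\Ker(\theta_{\pi'})$.
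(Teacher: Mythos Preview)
Your proof is correct and follows essentially the same approach as the paper: both arguments identify $\theta_{\pi'}=(\gamma_{\pi'})_N$ as a linear $\pi'$-special character with $(\theta_{\pi'})_{Q\cap N}=\delta_{Q\cap N}$, and then verify $G$-invariance by reducing to elements of $Q\cap N$ and invoking the stability of $\delta$. The only cosmetic difference is in the reduction step --- the paper passes to $\pi'$-parts of elements and conjugates inside $N$ into $P=Q\cap N$, whereas you factor through $N=(Q\cap N)O^{\pi'}(N)$ and conjugate inside $W$; both are equivalent realisations of the same idea.
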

\begin{proof}
By the basic properties of $\pi$-characters, we see that $\theta$ is $\pi$-factored, and thus $\theta_{\pi'}$ lies under $\gamma_{\pi'}$.
Let $P=Q\cap N$, so that $P$ is a Hall $\pi'$-subgroup of $N$.
Write $\beta=\theta_{\pi'}$. Note that $\gamma_{\pi'}(1)=\delta(1)=1$,
so $(\gamma_{\pi'})_N=\beta$, and thus $\delta_P=\beta_P$.
In particular, $\beta$ is linear.
To show that $\beta$ is invariant in $G$, it suffices to prove that $\beta(x)=\beta(y)$,
where $x,y\in N$ are conjugate in $G$.
Since $\beta$ has $\pi'$-order in the group of linear characters of $N$,
we have $\beta(x)=\beta(x_{\pi'})$ and $\beta(y)=\beta(y_{\pi'})$.
Clearly the $\pi'$-part $x_{\pi'}$ of $x$ is also conjugate to the $\pi'$-part $y_{\pi'}$ of $y$,
so it is no loss to assume that $x,y$ are $\pi'$-elements of $N$.
In this case, we see that each of $x$ and $y$ is conjugate to some element of $P$,
and thus we may assume further that $x,y\in P$. Now $\beta(x)=\delta(x)=\delta(y)=\beta(y)$, as required.
\end{proof}

We need a sufficient condition for a $\pi$-factored normal pair to be maximal.

\begin{lem}\label{max-irr}
Let $\varphi\in\I(G)$, and suppose that $N$ is the unique maximal normal subgroup of $G$ such that the irreducible constituents of $\varphi_N$ have $\pi$-degree.
Assume that $\mu\in\I(N)$ is an irreducible constituent of $\varphi_N$
and $\alpha\in\Irr(N)$ is the $\pi$-special lift of $\mu$.
Then $(N,\alpha\beta)\in\mathcal{N}^*(G)$ for every $\pi'$-special character $\beta\in\Irr(N)$.
\end{lem}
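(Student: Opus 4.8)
The plan is to first reduce to the case $\beta=1_N$, and then to transport the problem to the ordinary characters of $G$ by replacing $\varphi$ by its canonical lift. To begin, since $\alpha$ is $\pi$-special and $\beta$ is $\pi'$-special, $\alpha\beta\in\Irr(N)$ is $\pi$-factored with $\pi$-factorization $\alpha\cdot\beta$, so $(N,\alpha\beta)\in\mathcal{N}(G)$ because $N\NM G$. Granting that $(N,\alpha)\in\mathcal{N}^*(G)$, the general case is then formal: if $(N,\alpha\beta)\le(M,\zeta)$ with $(M,\zeta)\in\mathcal{N}(G)$, write $\zeta=\zeta_\pi\zeta_{\pi'}$ for the $\pi$-factorization of $\zeta$; decomposing $\zeta_N=(\zeta_\pi)_N\,(\zeta_{\pi'})_N$ and using that a product of a $\pi$-special and a $\pi'$-special character of $N$ is irreducible, together with the uniqueness of $\pi$-factorizations, the assumption that $\alpha\beta$ is a constituent of $\zeta_N$ forces $\alpha$ to be a constituent of $(\zeta_\pi)_N$. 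Hence $(N,\alpha)\le(M,\zeta_\pi)\in\mathcal{N}(G)$, so $(M,\zeta_\pi)=(N,\alpha)$ and $M=N$; then $\zeta\in\Irr(N)$ lies over $\alpha\beta$, forcing $\zeta=\alpha\beta$, i.e.\ $(M,\zeta)=(N,\alpha\beta)$.

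It remains to show $(N,\alpha)\in\mathcal{N}^*(G)$. Let $\chi\in\B(G)$ be the canonical lift of $\varphi$, so that $\chi^0=\varphi$. Since $\pi$-special characters are $\B$-characters and such lifts are unique, $\alpha$ is the $\B$-lift of $\mu$; moreover the irreducible constituents of the restriction of a $\B$-character to a normal subgroup are again $\B$-characters, and if $\theta$ is a constituent of $\chi_N$ then $\theta^0$ is a constituent of $(\chi_N)^0=(\chi^0)_N=\varphi_N$. As the constituents of $\varphi_N$ are precisely the $G$-conjugates of $\mu$, every constituent of $\chi_N$ is the $\B$-lift of a $G$-conjugate of $\mu$, hence a $G$-conjugate of $\alpha$; in particular $\alpha$ lies under $\chi$ and $\chi_N$ has $\pi$-factored constituents. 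Applying Lemma~\ref{syl}(a) to $\chi$, let $L\NM G$ be the largest normal subgroup such that every irreducible constituent of $\chi_L$ is $\pi$-factored, so that $(L,\theta_0)\in\mathcal{N}^*(G)$ for every constituent $\theta_0$ of $\chi_L$. Then $N\le L$ by the above, and once the reverse inclusion $L\le N$ is established we obtain $L=N$ and $(N,\alpha)=(L,\alpha)\in\mathcal{N}^*(G)$, since $\alpha$ is a constituent of $\chi_L=\chi_N$.

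The crux is therefore the inclusion $L\le N$, which by the maximality of $N$ with respect to $\varphi$ (Lemma~\ref{max}) amounts to showing that every irreducible constituent of $\varphi_L$ has $\pi$-degree. Here I would invoke the structural fact that a $\pi$-factored $\B$-character is necessarily $\pi$-special: the constituents of $\chi_L$ are $\pi$-factored by the choice of $L$ and are $\B$-characters by the closure of $\B$ under restriction to normal subgroups along $L\NM G$, hence $\pi$-special; restricting to $\pi$-elements then exhibits $\varphi_L=(\chi_L)^0$ as a sum of irreducible $\pi$-partial characters of $\pi$-degree, so $L\le N$. I expect the genuinely delicate points to be the $\B$-bookkeeping of the previous paragraph---identifying $\alpha$ as the $\B$-lift of $\mu$ and the constituents of $\chi_N$ as the $G$-conjugates of $\alpha$---together with the input that $\pi$-factored $\B$-characters are $\pi$-special; everything else is routine.
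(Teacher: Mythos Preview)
Your proof is correct and follows essentially the same approach as the paper: both reduce to $\beta=1_N$ by passing to $\pi$-parts, and both establish $(N,\alpha)\in\mathcal{N}^*(G)$ by taking the $\B$-lift $\chi$ of $\varphi$ and using that $\pi$-factored $\B$-characters are $\pi$-special. The only cosmetic difference is that the paper argues by contradiction via a chief factor $M/N$, whereas you directly identify $N$ with the subgroup $L$ of Lemma~\ref{syl}(a); the key inputs are identical.
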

\begin{proof}
We first show that $(N,\alpha)\in \mathcal{N}^*(G)$.
Otherwise, we can choose some $(M,\psi)\in \mathcal{N}(G)$ with $N<M$.
Since any normal constituent of $\pi$-factored characters is $\pi$-factored,
it is no loss to assume that $M/N$ is a chief factor of $G$,
so that $M/N$ is either a $\pi$-group or a $\pi'$-group.
By Corollary 4.5 of \cite{I2018}, we see that every member of $\Irr(M|\alpha)$ is $\pi$-factored.

We will use $\B$-characters to derive a contradiction (see \cite{I2018} for the definition and properties). Let $\chi\in\B(G)$ be a lift of $\varphi$. Then $\chi$ lies over $\alpha$ as $\chi^0=\varphi$ lies over $\alpha^0=\mu$,
and hence there exists $\rho\in\Irr(M)$ satisfying $(N,\alpha)\le (M,\rho)\le (G,\chi)$.
Then $\rho\in\B(M)$, which implies that $\rho$ is $\pi$-special.
It follows that $\rho^0$ is irreducible with $\pi$-degree,
so $\rho^0$ lies under $\varphi$.
Now the maximality of $N$ forces $M\le N$, and this contradiction proves that $(N,\alpha)\in \mathcal{N}^*(G)$.

Now, let $(M,\sigma)\in \mathcal{N}^*(G)$ be such that $(N,\alpha\beta)\le (M,\sigma)$.
Then $(N,\alpha)\le (M,\sigma_\pi)\in\mathcal{N}(G)$, and
by the maximality of $(N,\alpha)$, we have $N=M$. The result follows.
\end{proof}

The following result, in some sense, can be viewed as the converse of the above lemma.

\begin{lem}\label{max-par}
Let $(N,\theta)\in\mathcal{N}^*(G)$, where $\theta_{\pi'}$ is invariant in $G$,
and let $\varphi\in\I(G)$. Suppose that $\chi\in\Irr(G|\theta)$ is a lift of $\varphi$,
and that $\chi$ has a Navarro vertex $(Q,\delta)$ with $\delta(1)=1$.
Then $N$ is the unique maximal normal subgroup of $G$ such that the irreducible constituents of $\varphi_N$ have $\pi$-degree.
\end{lem}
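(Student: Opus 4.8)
The plan is to identify $N$ with the unique maximal normal subgroup $L$ of $G$ furnished by Lemma \ref{max} for $\varphi$ (the one for which the irreducible constituents of $\varphi_L$ have $\pi$-degree), by proving both inclusions. First I would normalise the set-up. Since the irreducible constituents of $\chi_N$ are precisely the $G$-conjugates of $\theta$, they are all $\pi$-factored, so $N$ is contained in the maximal normal subgroup provided by Lemma \ref{syl}(a) applied to $\chi$; as $\theta$ lies below a $G$-conjugate of the associated $\mathcal{N}^*(G)$-character and $(N,\theta)$ is maximal in $\mathcal{N}(G)$, those two subgroups coincide. Hence I may fix a normal nucleus $(W,\gamma)$ of $\chi$ with $N\le W$ and $\gamma$ lying over $\theta$, so that $\gamma_N=f\theta$ for some integer $f\ge 1$.

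Next I would pin down $\theta_{\pi'}$. As $\delta=(\gamma_{\pi'})_Q$ for a Hall $\pi'$-subgroup $Q$ of $W$ and $\pi'$-special characters restrict irreducibly to Hall $\pi'$-subgroups, we get $\delta(1)=\gamma_{\pi'}(1)$; since $\delta(1)=1$, the character $\gamma_{\pi'}$ is linear, and because it lies over $\theta_{\pi'}$ this forces $\theta_{\pi'}(1)=1$. Combined with the hypothesis that $\theta_{\pi'}$ is invariant in $G$, the character $\theta_{\pi'}$ is a $G$-invariant linear character of $N$; being $\pi'$-special and linear it has $\pi'$-order and is therefore trivial on the $\pi$-elements of $N$. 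Hence $\theta^0=(\theta_\pi)^0$, which is irreducible of $\pi$-degree since $\theta_\pi$ is $\pi$-special. As $\varphi_N=(\chi_N)^0$ has the $G$-conjugates of $\theta^0$ as its irreducible constituents, they all have $\pi$-degree, and so $N\le L$.

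For the reverse inclusion I would argue by contradiction, assuming $N\subsetneq L$. Replacing $\theta$ by a $G$-conjugate, choose an irreducible constituent $\mu$ of $\varphi_L$ lying over $\theta^0$ and let $\alpha\in\Irr(L)$ be its $\pi$-special lift. Since $\theta^0=(\theta_\pi)^0$ is a constituent of $\mu_N=(\alpha_N)^0$ and every constituent of $\alpha_N$ is $\pi$-special, uniqueness of $\pi$-special lifts shows that $\theta_\pi$ is a constituent of $\alpha_N$. I would then produce a $\pi'$-special character $\beta\in\Irr(L)$ lying over $\theta_{\pi'}$; this is legitimate because $\theta_{\pi'}$ is an $L$-invariant $\pi'$-special character of $N\NM L$, and such a character lies below a $\pi'$-special character of the overgroup by the standard theory of $\pi$-special characters. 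As products of a $\pi$-special and a $\pi'$-special character are irreducible and the factorisation of a $\pi$-factored character into its $\pi$-special and $\pi'$-special parts is unique, $\theta=\theta_\pi\theta_{\pi'}$ is then an irreducible constituent of $(\alpha\beta)_N=\alpha_N\beta_N$, i.e.\ $(N,\theta)\le(L,\alpha\beta)$. But $(L,\alpha\beta)\in\mathcal{N}^*(G)$ by Lemma \ref{max-irr}, and $N\subsetneq L$, contradicting the maximality of $(N,\theta)$ in $\mathcal{N}(G)$. Therefore $L=N$, as required.

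The step I expect to be the main obstacle is producing the $\pi'$-special character $\beta$ of $L$ over $\theta_{\pi'}$ — that is, checking that an invariant $\pi'$-special character of a normal subgroup lies beneath a $\pi'$-special character of the whole group — together with the bookkeeping that lets one choose the normal nucleus $(W,\gamma)$ compatibly with the given pair $(N,\theta)$, so that $\gamma_{\pi'}$ really does lie over $\theta_{\pi'}$.
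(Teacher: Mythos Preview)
Your argument follows the same strategy as the paper's: show that the constituents of $\varphi_N$ have $\pi$-degree (via $\delta(1)=1\Rightarrow\gamma_{\pi'}(1)=1\Rightarrow\theta_{\pi'}(1)=1$, so $\theta^0=(\theta_\pi)^0$), and then derive a contradiction from the existence of a strictly larger normal subgroup by building a $\pi$-factored character there over $\theta$.

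The one substantive difference is the level at which you build that $\pi$-factored character. You work directly at the maximal subgroup $L$ from Lemma~\ref{max}, and so you need a $\pi'$-special $\beta\in\Irr(L)$ lying over $\theta_{\pi'}$. Your justification (``an invariant $\pi'$-special character of a normal subgroup lies beneath a $\pi'$-special character of the overgroup by the standard theory'') is exactly the point you yourself flag as the obstacle, and it is not a result one can simply cite: the usual going-up statements for $\pi'$-special characters require the quotient to be a $\pi$-group or a $\pi'$-group. The paper sidesteps this entirely by taking a single chief factor $M/N$ with $N<M$ (which exists as soon as $N$ is not already maximal), so that $M/N$ is a $\pi$-group or a $\pi'$-group and the existence of a $\pi'$-special $\beta\in\Irr(M)$ over the $G$-invariant $\theta_{\pi'}$ is immediate from Theorem~2.4 of \cite{I2018}. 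Then $\hat\eta\beta$ (with $\hat\eta$ the $\pi$-special lift of a constituent of $\varphi_M$) is $\pi$-factored over $\theta$, already contradicting $(N,\theta)\in\mathcal{N}^*(G)$. If you make this one change---replace $L$ by a chief factor $M$---your proof becomes the paper's proof.

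Two minor remarks. First, your appeal to Lemma~\ref{max-irr} at the end is correct but unnecessary: once you have a $\pi$-factored character of a normal subgroup strictly above $N$ lying over $\theta$, maximality of $(N,\theta)$ in $\mathcal{N}(G)$ is already violated; you do not need membership in $\mathcal{N}^*(G)$. Second, the opening ``normalisation'' paragraph (identifying $(N,\theta)$ with the pair coming from Lemma~\ref{syl}(a) for $\chi$, and choosing a compatible nucleus) is carried out correctly and is implicit in the paper's one-line ``by the construction of Navarro vertices''.
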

\begin{proof}
By the construction of Navarro vertices discussed in Section 2,
we see that $\theta$ has $\pi$-degree,
and thus $\theta^0=(\theta_\pi)^0$, which is clearly irreducible and lies under $\varphi$.
Suppose that the lemma is false. Then there is a chief factor $M/N$ of $G$ such that
the irreducible constituents of $\varphi_M$ have $\pi$-degree.
Let $\eta\in\I(M)$ lie under $\varphi$ and over $\theta^0$.
Since $\eta(1)$ is a $\pi$-number, there exists a $\pi$-special character $\hat\eta$ of $M$ such that $\hat\eta^0=\eta $ (see Theorem 3.14 of \cite{I2018}).
It is clear that $\hat\eta$ lies over $\theta_{\pi}$, and since $\theta_{\pi'}$ is invariant in $G$
and $M/N$ is either a $\pi$-group or a $\pi'$-group, it follows that there exists a $\pi'$-special character $\beta$ of $M$ lying over $\theta_{\pi'}$ (see Theorem 2.4 of \cite{I2018}).
Now $\hat\eta\beta\in\Irr(M)$ is $\pi$-factored, and it lies over $\theta_\pi\theta_{\pi'}=\theta$, contradicting the maximality of $(N,\theta)$. The proof is complete.
\end{proof}

We are now ready to prove the following result, which is the $\pi$-version of Theorem A.

\begin{thm}\label{main}
Let $G$ be a $\pi$-separable group, $Q$ a $\pi'$-subgroup of $G$,
and $\delta$ a linear character of $Q$. If $\delta$ is stable in $G$,
then restriction $\chi\mapsto\chi^0$ defines a bijection $\Irr(G|Q,\delta)\to\I(G|Q)$.
\end{thm}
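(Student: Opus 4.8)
The plan is to set up the map $\chi\mapsto\chi^0$ and then produce an explicit inverse, proceeding by induction on $|G|$. Recall from the construction of Navarro vertices that if $\chi\in\Irr(G|Q,\delta)$ then $\chi=\gamma^G$ for a $\pi$-factored character $\gamma$ of the normal nucleus $W$, with $Q$ a Hall $\pi'$-subgroup of $W$ and $\delta=(\gamma_{\pi'})_Q$. Since $\delta$ is linear and stable, Lemma~\ref{inv} (applied along the chain of Clifford correspondences defining the nucleus) tells us that at each stage the $\pi'$-special factor of the relevant normal constituent is a $G$-invariant linear character; in particular $\gamma_{\pi'}(1)=1$ and $\gamma^0=(\gamma_\pi)^0$ is an irreducible $\pi$-partial character of $W$ of $\pi$-degree, so $\chi^0=(\gamma^0)^G$ lies in $\I(G)$ and has vertex $Q$ by the definition of vertices for $\pi$-partial characters. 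Thus the map is well defined into $\I(G|Q)$.

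For injectivity and surjectivity I would argue simultaneously via the normal-subgroup reduction. If $G$ itself has $\delta$ extending to a $\pi'$-special character, i.e. $Q$ is a Hall $\pi'$-subgroup of $G$, then Lemma~\ref{ext} gives a unique $\pi'$-special $\alpha\in\Irr(G)$ with $\alpha_Q=\delta$; here $\Irr(G|Q,\delta)$ should consist precisely of the $\pi$-factored characters $\chi=\chi_\pi\alpha$ with $\chi_\pi\in\B(G)$ a $\pi$-special lift, and $\chi^0=(\chi_\pi)^0\cdot 1$ runs bijectively over $\I(G|Q)=\I(G)$ (the $\pi$-degree ones). This is the base case. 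In general, let $N$ be the maximal normal subgroup of $G$ with all constituents of a given $\varphi\in\I(G|Q)$ of $\pi$-degree (Lemma~\ref{max}), pick a constituent $\mu\in\I(N)$ with $\pi$-special lift $\alpha$, and invoke Lemma~\ref{max-irr} to get $(N,\alpha\beta)\in\mathcal N^*(G)$ for every $\pi'$-special $\beta\in\Irr(N)$. Because $\delta$ is $G$-stable and $P:=Q\cap N$ is a Hall $\pi'$-subgroup of $N$, Lemma~\ref{ext} applied inside $N$ produces a unique $G$-invariant $\pi'$-special $\beta$ with $\beta_P=\delta_P$; set $\theta=\alpha\beta$. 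Then pass to the inertia group $T=G_\theta$, form the Clifford correspondent $\varphi_\theta\in\I(T)$ over $\theta^0=\mu$, note $\delta$ is still stable in $T$, apply induction to get a unique lift in $\Irr(T|Q,\delta)$ over $\theta$, and induce up. Lemma~\ref{max-par} is exactly what guarantees that any $\chi\in\Irr(G|Q,\delta)$ lying over such a $\theta$ has $N$ as its own maximal normal subgroup of $\pi$-degree constituents, so the correspondence respects the Clifford theory on both sides and the inductive bijection matches.

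The compatibility of Green/$\pi$-vertices under this induction is handled by Lemma~\ref{C2009-3.2}: a vertex $Q$ of $\varphi$ can be chosen so $Q\cap N$ is a vertex of $\mu$, which pins down $P$ up to the right conjugacy and makes the choice of $\beta$ (hence of $\theta$) canonical up to $G$-conjugacy; on the character side the analogous statement for Navarro vertices under the nucleus construction does the same. Chasing these through, $\chi\mapsto\chi^0$ and its inverse $\varphi\mapsto$ (induce the inductively-built lift over $\theta$) are mutually inverse, and the canonicity (independence of all choices up to $G$-conjugacy) follows because every ingredient---$N$, $\mu$, $\alpha$, $\beta$, $T$---is canonical up to conjugacy.

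The main obstacle I anticipate is the base case together with verifying that the two Clifford reductions (for $\Irr(G|Q,\delta)$ via the normal nucleus, and for $\I(G|Q)$ via Lemma~\ref{max}) are genuinely synchronized --- that is, that a character $\chi$ with $\chi^0$ lying over $\mu$ necessarily lies over $\theta=\alpha\beta$ with \emph{this} particular $\beta$ and not some other $\pi'$-special character over $\theta_{\pi'}$. This is where stability of $\delta$ is essential and where Lemmas~\ref{inv} and~\ref{max-par} must be combined carefully; getting the vertex character to be exactly $\delta$ (not merely $N$-conjugate or differing by a constituent issue) is the delicate point, and $2\in\pi$ is not needed here (unlike in Lemma~\ref{v-n}), which is worth double-checking.
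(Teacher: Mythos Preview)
Your approach is essentially the paper's: both argue by induction on $|G|$, passing through a maximal $\pi$-factored normal pair $(N,\theta)$ to the Clifford correspondent in $T=G_\theta$, and invoking Lemmas~\ref{inv}, \ref{ext}, \ref{max-irr}, \ref{max-par} at exactly the points you indicate. The paper differs only organizationally, treating well-definedness, injectivity and surjectivity as three separate inductive steps rather than constructing an explicit inverse; your identification of the synchronization of the two Clifford reductions as the crux is accurate, and your use of Lemmas~\ref{max-irr} and~\ref{max-par} to resolve it matches the paper.

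Two small points to tighten. First, your base case is mis-characterized: $Q$ being a Hall $\pi'$-subgroup of $G$ does \emph{not} by itself force every $\chi\in\Irr(G|Q,\delta)$ to be $\pi$-factored; the correct base of the induction is $T=G$ (equivalently $N=G$, so $\chi$ itself is $\pi$-factored by Lemma~\ref{syl}), and only then does the description $\chi=\chi_\pi\alpha$ with $\alpha$ as in Lemma~\ref{ext} apply. Second, Lemma~\ref{C2009-3.2} is not the right tool to ensure that the Clifford correspondent of $\varphi$ over $\mu$ in $T=G_\mu$ still has vertex $Q$; that lemma concerns restriction to a normal subgroup, whereas here you need the behaviour of vertices under the Clifford correspondence for $\pi$-partial characters (the paper cites Lemma~6.33 of \cite{I2018} for this). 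With these two substitutions your outline goes through.
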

\begin{proof}
We will carry out the following steps to complete the proof.

\emph{Step 1. \  For any $\chi\in\Irr(G|Q,\delta)$, we have $\chi^0\in\I(G|Q)$,
so the restriction map is well defined.}

Observe that once we establish that $\chi^0$ is irreducible,
then $Q$ is clearly a vertex of $\chi^0$ as  $\delta$ is linear.
So it suffices to show that $\chi^0\in\I(G)$, and we do this by induction on $|G|$.

By definition, there exists a normal nucleus $(W,\gamma)$ of $\chi$ such that
$Q$ is a Hall $\pi'$-subgroup of $W$ and $(\gamma_{\pi'})_Q=\delta$.
Also, there is some $(N,\theta)\in\mathcal{N}^*(G)$ satisfying
$$(N,\theta)\le (W,\gamma)\le (T,\psi)\le (G,\chi),$$
where $T=G_\theta$ and $\psi\in\Irr(T)$ is the Clifford correspondent of $\chi$ with respect to $\theta$. Then $(W,\gamma)$ is also a normal nucleus of $\psi$, so $\psi\in\Irr(T|Q,\delta)$.
If $T=G$, then $(N,\theta)=(G,\chi)$ by Lemma \ref{syl}, and thus $\chi$ is $\pi$-factored.
Moreover, note that $\gamma(1)=\gamma_\pi(1)$ is a $\pi$-number, so $\chi$ has $\pi$-degree. It follows that $\chi^0$ is irreducible, and we are done in this case.

We can therefore assume that $T<G$. By induction, we have $\psi^0\in\I(T)$.
By Lemma \ref{inv},
we see that $\theta_{\pi'}$ is a $G$-invariant linear character. This implies that $T=G_\theta=G_{\theta_\pi}=G_{\theta^0}$. Notice that $\psi^0$ lies over $\theta^0$,
so the Clifford correspondence for $\pi$-partial characters tells us that $(\psi^0)^G$ is irreducible. Now we have $\chi^0=(\psi^G)^0=(\psi^0)^G$,
and hence $\chi^0\in\I(G)$, as required.

\emph{Step 2. \  The map $\chi\mapsto\chi^0$ is injective.}

With the notation as above, we fix $\tilde\chi\in\Irr(G|Q,\delta)$ with $\tilde\chi^0=\chi^0=\varphi\in\I(G)$, and we want to show that $\tilde\chi=\chi$.
As in Step 1, we can find the following character pairs
$$(\tilde N,\tilde\theta)\le (\tilde W,\tilde\gamma)\le (\tilde T,\tilde\psi)\le (G,\tilde\chi),$$
where $(\tilde N,\tilde\theta)\in\mathcal{N}^*(G)$,
$(\tilde W,\tilde\gamma)$ is a normal nucleus of $\tilde\chi$,
$\tilde T$ is the inertia group of $\tilde\theta$ in $G$, $\tilde\psi$ is the Clifford correspondent of $\tilde\chi$ over $\tilde\theta$,
$Q$ is a Hall $\pi'$-subgroup of $\tilde W$ and $(\tilde\gamma_{\pi'})_Q=\delta$.
By Lemma \ref{inv} again, we see that $\tilde\theta_{\pi'}$ is invariant in $G$,
and by Lemma \ref{max-par}, we have $\tilde N=N$.
Furthermore, both $(\tilde\theta_\pi)^0$ and $(\theta_\pi)^0$ are irreducible constituents of $\varphi_N$, so they are conjugate in $G$. Replacing $\tilde\theta$ by a suitable conjugate,
we can assume that $(\tilde\theta_\pi)^0=(\theta_\pi)^0$, which forces
$\tilde\theta_\pi=\theta_\pi$.
Also, it is easy to see that $(\tilde\theta_{\pi'})_{Q\cap N}=\delta_{Q\cap N}=(\theta_{\pi'})_{Q\cap N}$ (see the proof of Lemma \ref{inv}),
which yields $\tilde\theta_{\pi'}=\theta_{\pi'}$ by Lemma \ref{ext},
and thus $\tilde\theta=\theta$. In particular, we have $\tilde T=T$.
In this case, we obtain $\tilde\psi,\psi\in\Irr(T|Q,\delta)$.
Note that both $\tilde\psi^0$ and $\psi^0$ induce $\varphi$,
so they are irreducible and thus lie over $\theta^0$.
Then the Clifford correspondence for $\pi$-partial characters deduce that
$\tilde\psi^0=\psi^0$. If $T<G$, then $\tilde\psi=\psi$ by induction on $|G|$,
and hence $\tilde\chi=\tilde\psi^G=\psi^G=\chi$, as wanted.
So we assume that $T=G$. Then $N=G$ by Lemma \ref{syl},
which implies that both $\tilde\chi$ and $\chi$ are $\pi$-factored with $\pi$-degree,
and that $Q$ is a Hall $\pi'$-subgroup of $G$. Furthermore, we have $(\tilde\chi_\pi)^0=\varphi=(\chi_\pi)^0$
and $(\tilde\chi_{\pi'})_Q=\delta=(\chi_{\pi'})_Q$.
It follows that
$\tilde\chi_\pi=\chi_\pi$ and $\tilde\chi_{\pi'}=\chi_{\pi'}$ (by Lemma \ref{ext}),
and thus we have $\tilde\chi=\chi$, as required.

\emph{Step 3. \  The map $\chi\mapsto\chi^0$ is surjective.}

Let $\varphi\in\I(G|Q)$. By Lemma \ref{max},
there exists a unique maximal normal subgroup $N$ such that the irreducible constituents of $\varphi_N$ have $\pi$-degree.
We can choose $\mu\in\I(N)$ lying under $\varphi$, such that $\eta\in\I(T|Q)$, where $T=G_\mu$ and $\eta$ is the Clifford correspondent of $\varphi$ over $\mu$
(see Lemma 6.33 of \cite{I2018} or Lemma 6.33 of \cite{I2018}).
In particular, we have $\eta^G=\varphi$.

Suppose first that $T<G$. By induction on $|G|$, there exists $\psi\in\Irr(T|Q,\delta)$ with $\psi^0=\eta$. As before, we have the following character pairs:
$$(M,\sigma)\le (W,\gamma)\le (T,\psi),$$
where $(M,\sigma)\in\mathcal{N}^*(T)$, $(W,\gamma)$ is a normal nucleus for $\psi$,
$Q$ is a Hall $\pi'$-subgroup of $W$ and $(\gamma_{\pi'})_Q=\delta$.
Then $\sigma_{\pi'}$ is invariant in $T$ by Lemma \ref{inv},
and applying Lemma \ref{max-par}, we conclude that $M$ is the unique maximal normal subgroup of $T$ such that the irreducible constituents of $\eta_M$ have $\pi$-degree.
Observe that $\eta_N=e\mu$ for some integer $e$ and $\mu(1)$ is a $\pi$-number,
so $N\le M$.

Furthermore, let $P=Q\cap N$ and $\lambda=\delta_P$.
Then $P$ is a Hall $\pi'$-subgroup of $N$ and
$\lambda$ is also stable in $G$. By Lemma \ref{ext},
there exists a unique $\pi'$-special character $\hat\lambda$ of $N$ extending $\lambda$.
Let $\hat\mu$ be the $\pi$-special lift of $\mu$, and let $\theta=\hat\mu\hat\lambda$.
Then $\theta\in\Irr(N)$ is $\pi$-factored with $\theta_\pi=\hat\mu$ and $\theta_{\pi'}=\hat\lambda$.

We claim that $\theta$ lies under $\gamma$.
To see this, note that $(\gamma_\pi)^0=\gamma^0$ lies under $\psi^0=\eta$
and that $\eta_N=e\mu$, so $\mu$ lies under $(\gamma_\pi)^0$.
This guarantees that $\hat\mu$ must lie under $\gamma_\pi$.
Also, we have
$$((\gamma_{\pi'})_N)_P=(\gamma_{\pi'})_P=\delta_P=
\lambda=\hat\lambda_P.$$
It follows that  $(\gamma_{\pi'})_N=\hat\lambda$ by Lemma \ref{ext},
and hence $\theta=\hat\mu\hat\lambda$ lies under $\gamma=\gamma_\pi\gamma_{\pi'}$,
as claimed.

Now Lemma \ref{max-irr} guarantees that $(N,\theta)\in\mathcal{N}^*(G)$.
Since $\lambda$ is stable in $G$, it follows from Lemma \ref{inv} that $\hat\lambda$ is invariant in $G$. So $G_\theta=G_{\hat\mu}=G_\mu=T$.
Observe that $\psi\in\Irr(T)$ lies over $\gamma$ and hence over $\theta$,
so $\psi^G\in\Irr(G)$ by the Clifford correspondence.
By definition, the nucleus $(W,\gamma)$ of $\psi$ is also a nucleus of $\psi^G$,
so $\psi^G\in\Irr(G|Q,\delta)$. Notice that $(\psi^G)^0=(\psi^0)^G=\eta^G=\varphi$,
and the result follows in this case.

Finally, we consider the remaining case $T=G$. Then $\varphi$ has $\pi$-degree by Lemma \ref{max},
and $Q$ is a Hall $\pi'$-subgroup of $G$.
Let $\alpha$ be the $\pi$-special lift of $\varphi$, and let $\beta$
be the unique $\pi'$-special extension of $\delta$ to $G$ (see Lemma \ref{ext}).
Writing $\chi=\alpha\beta$, we have $\chi\in\Irr(G|Q,\delta)$ and $\chi^0=\alpha^0=\varphi$.
The result follows in this case too.
\end{proof}

As an immediate consequence of the above theorem, we have the following, which is the $\pi$-version of Corollary B in the introduction.

\begin{cor}\label{rdz}
Let $\varphi\in\I(G|Q)$, where $G$ is a $\pi$-separable group.
If $Q\NM G$ and $\delta$ is a $G$-invariant linear character of $G$,
then there is a unique lift $\chi\in\Irr(G)$ of $\varphi$
having $(Q,\delta)$ as a Navarro vertex.
\end{cor}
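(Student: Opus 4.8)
The plan is to deduce Corollary \ref{rdz} directly from Theorem \ref{main} by checking that its hypotheses are met in the normal-subgroup situation. Since $Q\NM G$, the subgroup $Q$ is automatically a Hall $\pi'$-subgroup of $Q$ itself, but more to the point we must confirm that $Q$ is the vertex appearing in Theorem \ref{main} and that the $G$-invariant linear character $\delta$ is in fact \emph{stable} in $G$ in the technical sense (equal values on $G$-conjugate elements of $Q$), not merely $N_G(Q)$-invariant. The first step, therefore, is to observe that because $Q$ is normal in $G$, every element of $Q$ that is $G$-conjugate to another element of $Q$ is already conjugate to it via an element of $N_G(Q)=G$; hence a $G$-invariant linear character of $Q$ is the same thing as a $G$-stable one, and Theorem \ref{main} applies with this $Q$ and $\delta$.

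Next I would invoke Theorem \ref{main} to obtain the bijection $\Irr(G|Q,\delta)\to\I(G|Q)$ given by restriction to $\pi$-elements. Since $\varphi\in\I(G|Q)$ is given, there is exactly one $\chi\in\Irr(G|Q,\delta)$ with $\chi^0=\varphi$. By definition, $\chi$ is a lift of $\varphi$ and $\chi$ has $(Q,\delta)$ as a Navarro vertex, which gives existence; uniqueness among \emph{all} lifts of $\varphi$ with Navarro vertex $(Q,\delta)$ follows because any such lift lies in $\Irr(G|Q,\delta)$ and restricts to $\varphi$, so it must be the unique preimage. This essentially completes the argument, and converting from the $p$-version (Corollary B) to the $\pi$-version is handled by the standard translation: when $\pi=p'$, the Fong--Swan theorem identifies $\I(G)$ with $\IBr_p(G)$, Green vertices with $\pi$-vertices, and $\B$-characters with the canonical lifts, as already noted in Section 2.

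The main (and essentially only) obstacle is the bookkeeping in the first step: one must be careful that the notion of ``vertex'' for $\varphi\in\I(G)$ used in Theorem \ref{main} coincides with the intuitively obvious choice $Q$ when $Q\NM G$. This is not quite immediate from the definition of vertex (a Hall $\pi'$-subgroup of some $U\le G$ admitting $\mu\in\I(U)$ with $\mu^G=\varphi$ and $\mu(1)$ a $\pi$-number), so I would either appeal to Lemma \ref{C2009-3.2} with $N=Q$ — noting that the constituents of $\varphi_Q$ are linear $\pi$-partial characters of the $\pi'$-group $Q$, hence their vertex is $Q$, so $Q\cap Q=Q$ is indeed a vertex of $\varphi$ — or cite the relevant statement from \cite{I2018} directly. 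Once this identification is in place, no further computation is needed; the corollary is a formal consequence of Theorem \ref{main}.
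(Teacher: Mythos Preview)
Your approach is correct and matches the paper's, which simply declares the corollary an ``immediate consequence'' of Theorem~\ref{main} with no further argument. Your observation that $Q\NM G$ forces $N_G(Q)=G$, so that $G$-invariance of $\delta$ coincides with $G$-stability, is exactly the point.

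One minor remark: your final paragraph's ``obstacle'' is not actually present. The hypothesis already reads $\varphi\in\I(G|Q)$, so $Q$ is \emph{assumed} to be a vertex of $\varphi$; there is nothing to verify via Lemma~\ref{C2009-3.2} or otherwise. Theorem~\ref{main} then applies directly and the bijection hands you the unique $\chi$.
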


The next is the $\pi$-analogue of Corollary C from the introduction.

\begin{cor}\label{=}
Let $G$ be a $\pi$-separable group having a nilpotent Hall $\pi'$-subgroup,
and assume that $Q$ is a $\pi'$-subgroup of $G$.
If $\delta$ is a $G$-stable linear character of $Q$, then $|\Irr(G|Q,\delta)|=|\Irr(N_G(Q)|Q,\delta)|$.
\end{cor}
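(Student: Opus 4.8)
The plan is to derive this from Theorem~\ref{main} by translating everything into a statement about vertices of $\pi$-partial characters. First I would check that the hypotheses descend to the subgroup $H=N_G(Q)$: indeed $H$ is $\pi$-separable (being a subgroup of the $\pi$-separable group $G$), $Q$ is a normal $\pi'$-subgroup of $H$, and since $\delta$ is stable in $G$ it is a fortiori stable in $H\le G$; moreover $H$ again has nilpotent Hall $\pi'$-subgroups, since a Hall $\pi'$-subgroup of $H$ embeds in one of $G$. Applying Theorem~\ref{main} to $G$ and to $H$ at the same time, restriction to $\pi$-elements gives bijections $\Irr(G|Q,\delta)\to\I(G|Q)$ and $\Irr(N_G(Q)|Q,\delta)\to\I(N_G(Q)|Q)$, whence $|\Irr(G|Q,\delta)|=|\I(G|Q)|$ and $|\Irr(N_G(Q)|Q,\delta)|=|\I(N_G(Q)|Q)|$. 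So the corollary is reduced to the identity $|\I(G|Q)|=|\I(N_G(Q)|Q)|$.

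This reduced statement no longer involves $\delta$, and it is exactly the point at which the nilpotency of the Hall $\pi'$-subgroups is needed: in the classical case $\pi=p'$ a Hall $\pi'$-subgroup is a Sylow $p$-subgroup, hence automatically nilpotent, and the corresponding fact $|\IBr_p(G|Q)|=|\IBr_p(N_G(Q)|Q)|$ for $p$-solvable $G$ is known. I expect $|\I(G|Q)|=|\I(N_G(Q)|Q)|$ to be available from the existing theory of vertices of $\pi$-partial characters; if one instead wants a self-contained argument, it can be obtained by induction on $|G|$. When $Q$ is itself a Hall $\pi'$-subgroup of $G$, both sides count the members of $\I$ of $\pi$-degree (of $G$, resp.\ of $N_G(Q)$), and one matches them through their $\pi$-special lifts together with a Glauberman/extension-type argument, using that the Hall $\pi'$-subgroup is nilpotent. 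In general one picks a suitable normal subgroup $N\NM G$, uses Lemma~\ref{C2009-3.2} to track a vertex of a constituent $\theta$ of $\varphi_N$ inside a vertex of $\varphi$, passes to the inertia group via the Clifford correspondence for $\pi$-partial characters, and applies the inductive hypothesis both inside that inertia subgroup and inside $N_G(Q)$.

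The main obstacle is precisely the reduced identity $|\I(G|Q)|=|\I(N_G(Q)|Q)|$, which is a ``Green correspondence on the level of $\pi$-partial characters'' phenomenon: the real work is either to cite the appropriate result from the $\pi$-partial character literature (in the spirit of \cite{C2009,I2018}) or to run the Clifford-theoretic induction above while keeping careful track of where the nilpotency hypothesis is invoked. By contrast, the passage between ordinary characters and $\pi$-partial characters, via two applications of Theorem~\ref{main}, is completely formal.
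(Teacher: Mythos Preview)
Your proposal is correct and follows essentially the same approach as the paper: apply Theorem~\ref{main} to both $G$ and $N_G(Q)$ to reduce to the identity $|\I(G|Q)|=|\I(N_G(Q)|Q)|$, and then invoke the known result from the $\pi$-partial character literature (the paper cites Theorem~6.29 of \cite{I2018}, equivalently Theorem~6.2 of \cite{IN1995}), which is precisely where the nilpotency of the Hall $\pi'$-subgroup enters. Your sketch of an inductive alternative for the reduced identity is unnecessary here since the result is in the literature, but your diagnosis of the structure of the argument is exactly right.
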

\begin{proof}
By Theorem 6.29 of \cite{I2018} (or Theorem 6.2 of \cite{IN1995}),
we have $|\I(G|Q)|=|\I(N_G(Q)|Q)|$
and the result follows by Theorem \ref{main}.
\end{proof}

Finally, we prove the $\pi$-version of Theorem D with a somewhat stronger form.

\begin{thm}\label{app}
Let $G$ be a $\pi$-separable group with $2\in\pi$, and suppose that $N\NM G$.
Let $\chi\in\Irr(G)$ be a lift with Navarro vertex $(Q,\delta)$, and write $P=Q\cap N$ and $\lambda=\delta_P$. Assume that $\lambda$ is stable in $N$.
Then every irreducible constituent $\psi$ of $\chi_N$ is a lift,
and if in addition $\lambda$ is invariant in $N_G(P)$, then $G_\psi=G_{\psi^0}$.
\end{thm}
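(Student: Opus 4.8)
\textbf{Proof proposal for Theorem \ref{app}.}

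The plan is to reduce everything to the machinery already assembled, principally Lemmas \ref{v-n}, \ref{inv} and Theorem \ref{main}. First I would apply Lemma \ref{v-n}: since $2\in\pi$ and $\chi$ is a lift with Navarro vertex $(Q,\delta)$, some irreducible constituent $\psi_0$ of $\chi_N$ has Navarro vertex $(Q\cap N,\delta_{Q\cap N})=(P,\lambda)$. Because $\lambda$ is stable in $N$, Theorem \ref{main} (applied to the group $N$, the $\pi'$-subgroup $P$ and the linear character $\lambda$) tells us that $\psi_0^0\in\I(N|P)$; in particular $\psi_0$ is a lift. To pass from this one constituent to \emph{every} irreducible constituent $\psi$ of $\chi_N$, I would invoke Clifford theory: all constituents of $\chi_N$ are $G$-conjugate, and the property of being a lift, as well as having a given Navarro vertex up to conjugacy, is preserved under conjugation. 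Hence every $\psi$ is a lift, with Navarro vertex $(P,\lambda)$ (up to $N$-conjugacy), which settles the first assertion.

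For the second assertion, assume in addition that $\lambda$ is invariant in $N_G(P)$ --- equivalently, by the observation in the introduction, $\lambda$ is stable in $N_G(P)$, so Theorem \ref{main} applies inside $N_G(P)$ as well. Fix an irreducible constituent $\psi$ of $\chi_N$; since $G$ acts transitively on these constituents it is enough to prove $G_\psi=G_{\psi^0}$ for one of them, and I would take $\psi=\psi_0$ with Navarro vertex exactly $(P,\lambda)$. The inclusion $G_\psi\subseteq G_{\psi^0}$ is automatic because restriction to $\pi$-elements is compatible with the $G$-action. For the reverse inclusion, suppose $g\in G_{\psi^0}$, i.e. $(\psi^0)^g=\psi^0$. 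Then $\psi^g$ is an irreducible constituent of $\chi_N$ (as $N\NM G$ and $\psi$ lies under $\chi$) whose restriction to $\pi$-elements equals $\psi^0$. The key point is that $\psi^g$ and $\psi$ then have the \emph{same} vertex data: $\psi^g$ has Navarro vertex $(P,\lambda)^g=(P^g,\lambda^g)$, and since both $\psi$ and $\psi^g$ are lifts of the same $\pi$-partial character $\psi^0\in\I(N)$, their vertices agree up to $N$-conjugacy, so $P^g$ is $N$-conjugate to $P$; adjusting $g$ by an element of $N$ (which changes neither $\psi^0$ nor, up to the stabilizer, the conclusion) I may assume $P^g=P$, so $g\in N_G(P)$. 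Now $\lambda$ is $N_G(P)$-invariant, so $\lambda^g=\lambda$, and $\psi^g$ has Navarro vertex $(P,\lambda)$ — the same pair as $\psi$. Thus both $\psi$ and $\psi^g$ lie in $\Irr(N|P,\lambda)$ and restrict to the same member $\psi^0$ of $\I(N|P)$; by the injectivity half of Theorem \ref{main} (applied to $N$), we conclude $\psi^g=\psi$, i.e. $g\in G_\psi$. This proves $G_{\psi^0}\subseteq G_\psi$ and hence equality.

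The main obstacle I anticipate is the bookkeeping in the reverse inclusion: one must be careful that the element $g\in G_{\psi^0}$ can be arranged (after multiplying by a suitable element of $N$) to normalize $P$ \emph{without} disturbing the hypothesis $g\in G_{\psi^0}$ or the identity of $\psi$, and that the two ways of recovering the vertex of a lift — directly from its Navarro vertex and via Lemma \ref{C2009-3.2} from $\psi^0$ — really do match up over $N$. Once it is clean that $\psi^g$ has Navarro vertex $(P,\lambda)$ and restricts to $\psi^0$, the uniqueness in Theorem \ref{main} closes the argument immediately. A secondary subtlety is simply confirming that $\lambda$ stable in $N$ suffices to run Theorem \ref{main} over $N$ (it does, by the definition of stability) and that $\lambda$ invariant in $N_G(P)$ is exactly what is needed to run it over $N_G(P)$ in the guise used above — here I am using the elementary remark from the introduction that a stable character is automatically invariant in the relevant normalizer, and conversely a linear character invariant under $N_G(P)$, together with $\lambda$ already being stable in $N$, gives what is required.
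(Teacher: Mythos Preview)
Your argument is correct and follows essentially the same route as the paper: Lemma \ref{v-n} yields a constituent with Navarro vertex $(P,\lambda)$, Theorem \ref{main} gives the lift property, and for the stabilizer equality you use that $P$ and $P^g$ are $N$-conjugate vertices of $\psi^0$, adjust by an element of $N$ into $N_G(P)$, and then apply the injectivity of Theorem \ref{main}. One small point: your aside that ``$\lambda$ invariant in $N_G(P)$'' is equivalent to ``$\lambda$ stable in $N_G(P)$'' is not what the introduction says and is not needed---you only ever apply Theorem \ref{main} to $N$ (where stability is assumed), and the $N_G(P)$-invariance of $\lambda$ is used solely to conclude $\lambda^g=\lambda$.
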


\begin{proof}
By Lemma \ref{v-n}, there exists an irreducible constituent $\psi$ of $\chi_N$ having  $(P,\lambda)$ as a Navarro vertex,
and then Theorem \ref{main} implies that $\psi^0\in\I(N)$.

Assume further that $\lambda$ is invariant in $N_G(P)$.
Writing $\eta=\psi^0$, we have $G_\psi\le G_\eta$.
To obtain the reverse containment, let $g\in G_\eta$.
Then both $P^g$ and $P$ are clearly vertices for $\eta$, so they are conjugate in $N$.
It follows that $g\in N_G(P)N$, and hence $g=xn$ for some $x\in N_G(P)$ and $n\in N$.
Observe that $x=gn^{-1}\in G_\eta$ and $N\le G_\psi$.
To prove that $g\in G_\psi$, therefore, it suffices to show that $x\in G_\psi$.
Now $\lambda$ is $N_G(P)$-invariant, so $\psi,\psi^x\in\Irr(N|P,\lambda)$.
By Theorem \ref{main} again, we obtain $\psi=\psi^x$, so $x\in G_\psi$, as desired.
\end{proof}

%%%%%%%%%%%%%%%%%%%%%%%%%%%%%%%%%%%%
\section*{Acknowledgements}

This work was supported by the NSF of China (No. 12171289) and the NSF of Shanxi Province (Nos. 20210302123429, 20210302124077).

%The author is grateful to the referee for the valuable suggestions which greatly improved the manuscript.

%%%%%%%%%%%%%%%%%%%%%%%%%%

%%%%%%%%%%%%%%%%%%%%%%%%%%%%%%%%%%%%%%%%%%
\end{document}